\newtheorem{thm}{Theorem}[section]
\newtheorem{lem}{Lemma}[section]
\newtheorem{prop}{Proposition}[section]
\newtheorem{cor}{Corollary}[section]
\theoremstyle{remark}
\newtheorem{ex}[thm]{Example}
\newcommand{\cost}{{\sf cost}}
\newcommand{\width}{{\sf width}}
\title{Kostka semigroups and generalized Dyck paths}
\author{Jaehyung Kim }
\address{Dept.~of Mathematics, U. Illinois at Urbana-Champaign, Urbana, IL 61801, USA}
\email{jk65@illinois.edu}
\date{May 14, 2021}
\begin{document}
\pagestyle{plain}

\maketitle
\begin{abstract}
We prove a conjecture of S.~Gao-J.~Kiers-G.~Orelowitz-A.~Yong which asserts the reducibility
of certain generalized Dyck paths. This gives a strengthening, and new proof, for their 
Width Bound Theorem
on the Hilbert basis of the Kostka semigroup.
\end{abstract}

\section{Introduction}
Let $\vec{x} = (x_1, \ldots , x_t)$ be a list of nonzero integers. S.~Gao, J.~Kiers, G.~Orelowitz, and A.~Yong  \cite{hilbasis} define $\vec{x}$ to be \emph{generalized Catalan} if
\begin{equation} \label{eq:1}
\sum_{i=1}^t x_i = 0    \text{ \ and \ }
    \sum_{i=1}^q x_i \geq 0 \text{\ \  for  all \ $1 \leq q \leq t$.}
\end{equation}
Furthermore,  $\vec{x}$ is \emph{reducible} if there is a (generalized) Catalan sublist $\vec{x}^{\circ} = (x_{i_1}, x_{i_2}, \ldots , x_{i_a})$ such that the complementary sublist $\vec{x}^{\bullet}$ is also Catalan. 

A maximal consecutive sublist of $\vec{x}$ consisting of integers of the same sign is a \emph{run}. By (\ref{eq:1}) there are even number $2y$ of runs. Let $a_k > 0$ be the maximum (in absolute value) of any $x_i$ in run $k$. Define
\begin{equation}
    \cost(\vec{x}) = \sum_{k=1}^{2y} a_k \text{ and $\width(\vec{x}) = t$.}
\end{equation}

Our main result is a proof of \cite[Conjecture~5.3]{hilbasis}:
\begin{thm} \label{conj1}
  If $\cost(\vec{x}) < \width(\vec{x})$ then $\vec{x}$ is reducible.
\end{thm}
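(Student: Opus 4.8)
The plan is to reformulate reducibility combinatorially, dispose of some easy cases, and then run an induction driven by the slack $\width-\cost$.

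First I would reformulate. A splitting $\vec{x}=\vec{x}^{\circ}\sqcup\vec{x}^{\bullet}$ into two Catalan sublists is the same as a $2$-colouring of the positions $\{1,\dots,t\}$ so that, writing $S_q=\sum_{i\le q}x_i$ and $A_q=\sum_{i\le q,\, i\in\circ}x_i$, one has $0\le A_q\le S_q$ for every $q$ (the $\bullet$-partial sum $S_q-A_q$ is then automatically $\ge 0$), $A_t=0$, and both colour classes nonempty. Since $S$ is strictly monotone inside each run, the constraints at positions interior to a run are redundant, so the data is equivalent to: choose a sub-multiset of each run — with sum $\beta_j$ for the $j$-th positive run and $\gamma_j$ for the $j$-th negative run — such that the partial differences $d_j:=\sum_{i\le j}(\beta_i-\gamma_i)$ satisfy $0\le d_j\le T_{2j}$ for all $j$ (where $T_{2j}$ is the value of $S$ at the end of run $2j$, so $T_{2y}=0$), with $d_y=0$, and with the colouring nontrivial. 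The elementary fact I would isolate here is that the sub-multiset sums of a run with maximum entry $a$ form an $a$-net of $[0,|R|]$: consecutive attainable sums differ by at most $a$, since from any non-full sub-multiset one may adjoin an omitted entry. (Note that over the reals this selection problem is trivially feasible and nontrivial; the whole difficulty is integrality, i.e.\ staying on these $a_k$-nets.)

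Second, the easy reductions. If $T_{2j}=0$ for some interior $j$, then the first $2j$ runs and the last $2y-2j$ runs are themselves Catalan sublists and $\vec{x}$ is already reducible; so assume all interior $T_{2j}\ge 1$. The base case $y=1$ then becomes the assertion that two runs with a common total $N$ and with $a_1+a_2<\ell_1+\ell_2$ must share a sub-multiset sum strictly between $0$ and $N$; I would prove this by comparing the two $a_i$-nets of sub-multiset sums inside $[0,N]$ and counting, using also that a run of length $\ell$ has at least $\ell+1$ distinct sub-multiset sums, so the positive-run and negative-run nets have at least $\ell_1-1$ and $\ell_2-1$ points in $(0,N)$ respectively. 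A further useful reduction: if some positive run contains an entry $+c$ and some \emph{later} negative run contains an entry $-c$ with $c\le T_{2j}$ for every interior $j$ between them (in particular whenever $c=1$, by the previous reduction), then $(c,-c)$ is a Catalan sublist whose complement is Catalan, so $\vec{x}$ is reducible; hence one may assume no such pair exists, which confines the small entries to a restricted prefix/suffix pattern among the runs (using also the reverse-and-negate symmetry, which preserves all the relevant quantities).

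Third — the heart of the argument — I would induct on $\width$. Since $\width-\cost=\sum_k(\ell_k-a_k)\ge 1$, fix a run $R_{k_0}$ with $\ell_{k_0}>a_{k_0}$. Its $a_{k_0}$-net of sub-multiset sums contains a positive value $v$ that is small relative to the corridor the run lives in; inject exactly $v$ units of $R_{k_0}$ into colour $\circ$ and try to route that $v$ back down to $0$ through the succeeding opposite-sign runs, at each step using the relevant $a_k$-net to pick a sub-multiset sum that keeps $d_j$ inside $[0,T_{2j}]$ and drives it toward $0$. If the route closes up we obtain the desired nontrivial splitting. The main obstacle is precisely this routing step: the step sizes available at run $k$ are governed by $a_k$ (via the net), while the corridor heights are governed by the $T_{2j}$ and the room to return is governed by the remaining run-totals, and keeping all of these compatible while the walk returns exactly to $0$ and stays nontrivial is where the single inequality $\cost<\width$ must be spent in full. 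I expect the clean way to run this is a minimal-counterexample argument: show that if no nontrivial valid colouring exists then, after the reductions above, every run must satisfy $\ell_k\le a_k$, directly contradicting $\width>\cost$.
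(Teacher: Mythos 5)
Your framework---recasting reducibility as choosing a sub-multiset of each run, with run-sums $\beta_j,\gamma_j$ subject to corridor constraints $0\le d_j\le T_{2j}$, together with the observation that the sub-multiset sums of a run with maximum entry $a$ form an $a$-net---is sound and is a genuinely different entry point from the paper. But the proposal has a real gap at its core: the ``routing'' step for general $y$ is never carried out, and you acknowledge this yourself. Writing ``I expect the clean way to run this is a minimal-counterexample argument: show that if no nontrivial valid colouring exists then \ldots\ every run must satisfy $\ell_k\le a_k$'' is restating the theorem, not proving it; the difficulty of picking a value $v$ in the $a_{k_0}$-net and threading it through the corridors $[0,T_{2j}]$ back to $0$ without getting stuck is exactly where the content lies, and nothing in the sketch shows the route must close up. A second, smaller issue: in the base case $y=1$, the counting you describe (two $a_i$-nets in $[0,N]$, each with at least $\ell_i+1$ points, must intersect off $\{0,N\}$) is not true for abstract $a$-nets; you need something extra, e.g.\ reducing the down-run prefix sums modulo the up-run's $a_1$-net (take the largest up prefix-sum below each $t_j$, note the residue lies in $\{0,\dots,\alpha_1-1\}$, and pigeonhole). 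That repair works but is then essentially the same pigeonhole-on-bounded-partial-sums engine the paper runs.

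For contrast, the paper eliminates routing entirely by a change of order. It greedily builds a permutation $\pi$ of $[t]$ that interleaves the up- and down-entries so the running partial sum stays as low as possible; shows $x\circ\pi$ is still Catalan; proves an order-preservation lemma which guarantees that any Catalan sub-family of $x\circ\pi$ pulls back (via $\pi$) to a Catalan sublist of the \emph{original} $\vec x$ whose chosen positives all precede its chosen negatives; and then observes that in each ``phase'' of $x\circ\pi$ the partial sums are confined to $[0,\alpha_i)$ or $[0,\beta_j)$. The inequality $\cost<\width$ then forces, by pigeonhole, two equal partial sums inside one phase, and the interval between them carves out the desired nontrivial decomposition. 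Your plan would need to re-derive a statement of comparable force before the induction on $\width$ could be closed; as written it is an outline, not a proof.
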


\begin{ex} \label{ex1}
The following list is generalized Catalan:
\[\vec{x} = (\underline{5},5,4,4, \underline{-3},-3,-3,-3,-3, \underline{-1}, 5,5,5,\underline{3}, \underline{-4},-4,-4,-4,-4).\] 
One has
\[\cost(\vec{x}) = 5+3+5+4 = 17, \text{ \ and \ $\width(\vec{x}) = 19$.}\]
Furthermore, let $\vec{x}^{\circ} = (5,-3,-1,3,-4)$ be a sublilst consisting of underlined elements, then $\vec{x}^{\circ}$ and the complementary sublist $\vec{x}^{\bullet} = (5,4,4,-3,-3,-3,-3,5,5,5,-4,-4,-4,-4)$ witness that $\vec{x}$ is reducible.\qed

It is sometimes useful to view $\vec{x}$ as a (generalized) \emph{Dyck path}, where $q$th edge is of length $|x_q|\sqrt{2}$. See Figure \ref{fig1} for such a visualization for $\vec{x}$
from Example~\ref{ex1}; blue edges represent the sublist $\vec{x}^{\circ}$ and gray edges represent the sublist $\vec{x}^{\bullet}$.

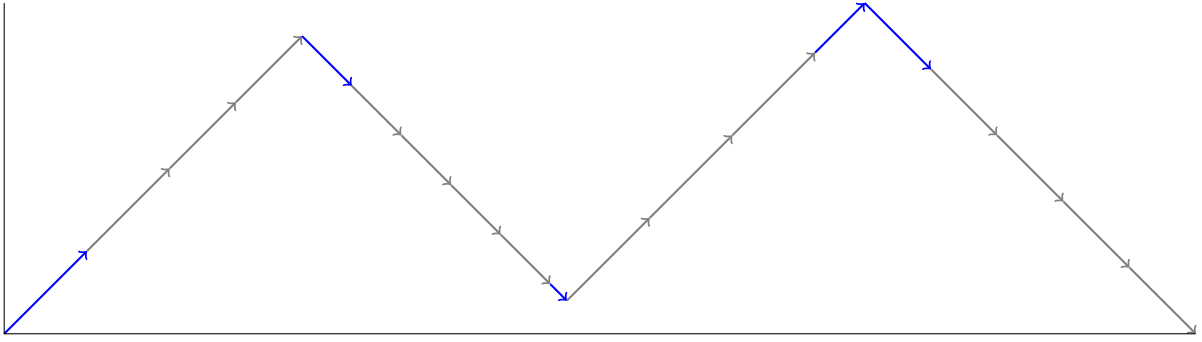
\begin{figure} 
\begin{tikzpicture}[scale = 0.22]
\draw [->, blue, thick] (0,0) -- (5,5);
\draw [->, gray, thick] (5,5) -- (10,10);
\draw [->, gray, thick] (10,10) -- (14,14);
\draw [->, gray, thick] (14,14) -- (18,18);
\draw [->, blue, thick] (18,18) -- (21,15);
\draw [->, gray, thick] (21,15) -- (24,12);
\draw [->, gray, thick] (24,12) -- (27,9);
\draw [->, gray, thick] (27,9) -- (30,6);
\draw [->, gray, thick] (30,6) -- (33,3);
\draw [->, blue, thick] (33,3) -- (34,2);
\draw [->, gray, thick] (34,2) -- (39,7);
\draw [->, gray, thick] (39,7) -- (44,12);
\draw [->, gray, thick] (44,12) -- (49,17);
\draw [->, blue, thick] (49,17) -- (52,20);
\draw [->, blue, thick] (52,20) -- (56,16);
\draw [->, gray, thick] (56,16) -- (60,12);
\draw [->, gray, thick] (60,12) -- (64,8);
\draw [->, gray, thick] (64,8) -- (68,4);
\draw [->, gray, thick] (68,4) -- (72,0);
\draw (0,20) -- (0,0) -- (72,0);

\end{tikzpicture}
\caption{Dyck path for Example \ref{ex1}.}
\label{fig1}
\end{figure}

\end{ex}

The motivation for Theorem~\ref{conj1} comes from the study of the Hilbert basis of the \emph{Kostka 
semigroup} presented in \cite{hilbasis}. Let ${\sf Par}_r(n)$ be the set of integer partitions $\lambda=(\lambda_1\geq\lambda_2\geq\ldots
\geq\lambda_r\geq 0)$ of $n$ with at most $r$ nonzero parts. (We identify integer partitions with their Young diagrams). Recall that if $\lambda,\mu\in {\sf Par}_r(n)$ then $\lambda\geq \mu$ in \emph{dominance order} if $\sum_{i=1}^t \lambda_i\geq \sum_{i=1}^t \mu_i$ for $1\leq t\leq r$.

The \emph{Kostka cone} is defined as :
\[ \sf{Kostka}_r = \left\{ (\lambda, \mu) \in \mathbb{R}^{2r} \ : \ 
\begin{matrix}
\lambda_1 \geq \lambda_2 \geq \ldots \geq \lambda_r \geq 0 \\
\mu_1 \geq \mu_2 \geq \ldots \geq \mu_r \geq 0 \\
\sum_{i=1}^t \lambda_i \geq \sum_{i=1}^t \mu_i, \text{ for } 1 \leq t \leq r-1 \\
\sum_{i=1}^r \lambda_i = \sum_{i=1}^r \mu_i
\end{matrix}
 \right\} \subset \mathbb{R}^{2r}.\]
 We are interested in its lattice points, namely
 \[\sf{Kostka}_r^{\mathbb{Z}}:=\sf{Kostka}_r \cap \mathbb{Z}^{2r}.\]

Following \cite{hilbasis}, $(\lambda, \mu)\in \sf{Kostka}_r^{\mathbb{Z}}$ is \emph{reducible} if nontrivial $(\lambda^{\bullet}, \mu^{\bullet}), (\lambda^{\circ}, \mu^{\circ}) \in \sf{Kostka}_r^{\mathbb{Z}}$ exist such that
\begin{equation}
\label{eqn:thedecomposition}
 (\lambda, \mu) = (\lambda^{\bullet}, \mu^{\bullet}) + (\lambda^{\circ}, \mu^{\circ}). 
 \end{equation}
Also, (\ref{eqn:thedecomposition})
is \emph{commonly reducible} if there are common columns of $\lambda$ and $\mu$ giving $\lambda^{\bullet}$ and $\mu^{\bullet}$. 

By \cite[Proposition~5.7]{hilbasis}, Theorem~\ref{conj1} implies a
 strengthened version of \cite[Theorem~1.4]{hilbasis}; that is, we have
proved \cite[Conjecture~5.5]{hilbasis}:
  
\begin{cor} \label{conj2}
If $\lambda_1 > r$ then $(\lambda, \mu)$ is commonly reducible.
\end{cor}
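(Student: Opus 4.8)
The plan is to obtain Corollary~\ref{conj2} as an immediate consequence of Theorem~\ref{conj1}, using the dictionary between lattice points of the Kostka cone and generalized Catalan lists developed in \cite{hilbasis}; concretely, this is exactly the implication packaged in \cite[Proposition~5.7]{hilbasis}. So, granting that Proposition, there is nothing left to do, and what follows is a description of how that translation works and why the hypothesis $\lambda_1>r$ is the one that feeds Theorem~\ref{conj1}.

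To a pair $(\lambda,\mu)\in\mathsf{Kostka}_r^{\mathbb Z}$ one associates a list $\vec{x}=\vec{x}(\lambda,\mu)$ of nonzero integers built from the \emph{column} data of $\lambda$ and $\mu$. The key classical fact is that $\lambda\geq\mu$ in dominance if and only if $\mu^{T}\geq\lambda^{T}$ in dominance, so the natural object to read off is the comparison of the conjugate partitions $\lambda^{T}$ and $\mu^{T}$ — both of which have all parts at most $r$ — and, because $\mu^{T}\geq\lambda^{T}$ and $|\lambda^{T}|=|\mu^{T}|$, the resulting list $\vec{x}$ is generalized Catalan. The translation has the two features I want. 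First, a reduction of $\vec{x}$ in the sense of Theorem~\ref{conj1} (a Catalan sublist $\vec{x}^{\circ}$ whose complement $\vec{x}^{\bullet}$ is again Catalan) corresponds to a way of distributing the columns of $\lambda$ between two partitions $\lambda^{\bullet},\lambda^{\circ}$ and the columns of $\mu$ between $\mu^{\bullet},\mu^{\circ}$ so that $(\lambda^{\bullet},\mu^{\bullet})+(\lambda^{\circ},\mu^{\circ})=(\lambda,\mu)$ with both summands in $\mathsf{Kostka}_r^{\mathbb Z}$ — that is, to a witness that $(\lambda,\mu)$ is commonly reducible. Second, $\width(\vec{x})$ is governed from below by the number of columns involved, hence essentially by $\lambda_1$, while $\cost(\vec{x})$ is governed from above by the tallest column height, hence by $r$; the upshot recorded in \cite[Proposition~5.7]{hilbasis} is that $\lambda_1>r$ forces $\cost(\vec{x})<\width(\vec{x})$.

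With those two points in hand the deduction is immediate: assuming $\lambda_1>r$, the associated list satisfies $\cost(\vec{x})<\width(\vec{x})$, so Theorem~\ref{conj1} gives that $\vec{x}$ is reducible, and reading the reduction back through the dictionary exhibits the common reducibility of $(\lambda,\mu)$; one checks in passing that the reduction so produced is nontrivial, which holds because $\lambda_1>r\geq 1$ rules out the degenerate cases. In short, the only genuinely new ingredient is Theorem~\ref{conj1} itself, and the real obstacle is not in this corollary but in that theorem; here the work is purely the bookkeeping of \cite[Proposition~5.7]{hilbasis}, namely confirming that the inequality $\cost<\width$ extracted from $\lambda_1>r$ is precisely the hypothesis Theorem~\ref{conj1} requires. (If one wished to make the deduction self-contained, the task would be to reprove that Proposition — build $\vec{x}(\lambda,\mu)$ explicitly, match Catalan sublists with columnar splittings, and track the two size statistics — but none of that uses Theorem~\ref{conj1}, so it is orthogonal to the main result of this paper.)
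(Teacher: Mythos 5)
Your proposal is correct and matches the paper's own treatment: the paper also derives Corollary~\ref{conj2} purely by combining Theorem~\ref{conj1} with \cite[Proposition~5.7]{hilbasis}, and the columnar dictionary you sketch (setting $x_j:=\mu'_j-\lambda'_j$, so that $\width(\vec{x})=\lambda_1$ once zero entries are excluded and $\cost(\vec{x})\leq\ell(\mu)\leq r$) is exactly the one the paper spells out later when handling the boundary case $\lambda_1=r$. Nothing is missing; the one point to keep straight, which you implicitly acknowledge by deferring to the cited Proposition, is that columns with $x_j=0$ must be dropped before forming $\vec{x}$, and in that event $(\lambda,\mu)$ is already commonly reducible by splitting off such a column.
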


\ytableausetup
 {mathmode, boxsize=.9em}
 
\begin{ex}
Consider the following decomposition ($\lambda_1 = 5, \ r = 3$):
\[ (\lambda, \mu) = \left( \ydiagram{0+5,0+3,0+1}, \ydiagram{0+3,0+3,0+2,0+1} \ \right) = 
 \left( \ydiagram{0+3,0+2,0+1}, \ydiagram{0+2,0+2,0+1,0+1} \ \right) + \left( \ydiagram{0+2,0+1}, \ydiagram{0+1,0+1,0+1} \ \right) = (\lambda^{\bullet}, \mu^{\bullet}) + (\lambda^{\circ}, \mu^{\circ}) \]
That $(\lambda,\mu)$ is commonly reducible holds by choosing columns $\{1,3,5\}$ of $(\lambda, \mu)$ to obtain $(\lambda^{\bullet}, \mu^{\bullet})$. For $(\lambda^{\circ}, \mu^{\circ})$, we can choose complementary columns $\{2,4\}$. \qed
\end{ex}

In Section~\ref{sec:2}, we prove Theorem~\ref{conj1}. In Section~\ref{sec:3} we extend Theorem~\ref{conj1} to show reducibility in 
the case that ${\sf cost}(\vec x)={\sf width}(\vec x)$ and $y>1$ (reducibility does not hold if $y=1$). We also determine all the exceptions in the case $y=1$.

\section{Proof of Theorem \ref{conj1}}\label{sec:2}

Let $[t]:= \{1,2, \ldots , t\}$. Interpret a list $\vec{y} = (y_1, \ldots , y_t)$ of nonzero integers as a map $y:[t] \rightarrow \mathbb{Z}\setminus \{0 \}$ given by $i \mapsto y_i$. Conversely, given a map $z : [t] \rightarrow \mathbb{Z}\setminus \{0 \}$, define a corresponding list of nonzero integers $\vec{z}$ by
\[ \vec{z} = (z(1), z(2), \ldots , z(t)) .\]
Define $x:[t] \rightarrow \mathbb{Z} \setminus \{0 \}$ to be \emph{generalized Catalan} if 
\[ \vec{x} = (x(1), \ldots , x(t))\]
is generalized Catalan. Let $\sf{Catalan}_t$ be the set of all such maps.

Let ${\mathfrak S}_t$ be the symmetric group of bijections of $[t]$ and let $\vec{x}=(x_1, \ldots , x_t)$ be a generalized Catalan list (hence $x \in \sf{Catalan_t}$). 

Construct $\pi \in {\mathfrak S}_t$ associated to $x$. 
Let $\pi(1) = 1$. For $2\leq q\leq t$, let
\begin{equation}
\label{eqn:sdef}
s := \min(i\in [t] : i \not\in \{\pi(1), \ldots , \pi(q-1) \} \text{\  and  $x(i)<0$})
\end{equation}
and 
\begin{equation}
\label{eqn:s'def}
s' := \min(i\in [t] : i \not\in \{\pi(1), \ldots , \pi(q-1) \} \text{\ and $x(i) > 0$}).
\end{equation}
Now
\begin{equation}
\label{eqn:pidefcases}
\pi(q)=\begin{cases}
s & \text{if $\left( \sum_{i=1}^{q-1} x(\pi(i)) \right) + x(s) \geq 0$ and $s<\infty$}\\
s' & \text{otherwise}
\end{cases}
\end{equation}

\begin{lem}
The construction of $\pi\in {\mathfrak S}_n$ is well-defined.
\end{lem}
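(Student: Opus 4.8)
The claim is that the recursive recipe \eqref{eqn:pidefcases} actually produces a bijection $\pi\colon[t]\to[t]$. Since $\pi$ is built one value at a time, the only things that can go wrong are: (i) at some step $q$ both $s$ and $s'$ are $\infty$, so $\pi(q)$ is undefined; or (ii) the map fails to be injective. Injectivity is immediate from the construction, because at step $q$ we always choose $\pi(q)$ from $[t]\setminus\{\pi(1),\dots,\pi(q-1)\}$ (both $s$ and $s'$ are required to avoid the already-used values), so the real content is showing that for every $q\le t$ at least one of $s,s'$ is finite — equivalently, that we never get stuck. Since $\pi$ is injective and we run $q$ from $1$ to $t$, if every step succeeds then $\pi$ is automatically a bijection of $[t]$.

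So the plan is to prove: \emph{for each $2\le q\le t$, when we come to define $\pi(q)$, the set $[t]\setminus\{\pi(1),\dots,\pi(q-1)\}$ contains an index $i$ with $x(i)<0$ or an index $i$ with $x(i)>0$}, which is trivially true as long as that complement is nonempty — and it has exactly $t-(q-1)\ge 1$ elements. Thus $s'<\infty$ whenever there remains an unused index of positive sign, and $s<\infty$ whenever there remains an unused index of negative sign; since at least one unused index exists, at least one of the two is finite, and the ``otherwise'' branch picks $s'$. The one subtlety is the ``otherwise'' branch: it is invoked either when $s=\infty$ (fine, then $s'<\infty$ because some unused index exists and it must be positive) or when $s<\infty$ but $\bigl(\sum_{i=1}^{q-1}x(\pi(i))\bigr)+x(s)<0$; in that case we must still be sure $s'<\infty$.

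Handling that subtlety is the main (and only) obstacle, and it is where the generalized Catalan hypothesis \eqref{eq:1} enters. I would prove by induction on $q$ the invariant that the partial sum $P_{q-1}:=\sum_{i=1}^{q-1}x(\pi(i))$ is $\ge 0$, \emph{and moreover} that the used set $\{\pi(1),\dots,\pi(q-1)\}$ is an ``initial-segment-like'' set in the following sense: it contains every index $j$ with $x(j)>0$ that lies to the left of some used negative index, or more precisely that the multiset of used values is exactly what a greedy left-to-right matching would use. Concretely: if at step $q$ we have $s<\infty$ but $P_{q-1}+x(s)<0$, I claim there must be an unused positive index. Suppose not: then all unused indices are negative, so $\{\pi(1),\dots,\pi(q-1)\}$ contains \emph{all} positive indices; hence $P_{q-1}=\sum_{x(i)>0}x(i)+\sum_{i\in U,\,x(i)<0}x(i)$ where the second sum is over the used negative indices, and since $\sum_{\text{all }i}x(i)=0$ we get $P_{q-1}=-\sum_{i\notin U}x(i)=-\sum_{i\notin U, x(i)<0}x(i)\ge|x(s)|$ because $s$ is one of the unused negative indices; therefore $P_{q-1}+x(s)=P_{q-1}-|x(s)|\ge 0$, contradicting $P_{q-1}+x(s)<0$. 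So an unused positive index exists, $s'<\infty$, and $\pi(q)$ is defined.

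Finally I would close the loop by checking the induction actually carries: one shows $P_q\ge 0$ in both branches. In the branch $\pi(q)=s$ this is exactly the defining inequality $P_{q-1}+x(s)\ge 0$. In the branch $\pi(q)=s'$ we have $P_q=P_{q-1}+x(s')>P_{q-1}\ge 0$ since $x(s')>0$. (The base case $q=1$: $\pi(1)=1$ and $P_1=x(1)\ge 0$ by \eqref{eq:1} applied with $q=1$.) With the invariant maintained and each step well-defined, $\pi$ is a well-defined injection $[t]\to[t]$, hence a bijection, i.e.\ an element of ${\mathfrak S}_t$. I expect the write-up to be short; the only place demanding care is the sign/counting argument in the displayed paragraph above, i.e.\ translating ``$\vec x$ is generalized Catalan'' into the guarantee that a blocked negative step forces a still-available positive step.
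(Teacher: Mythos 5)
Your proposal is correct and uses the same key idea as the paper: if the algorithm must fall to the otherwise branch and no unused positive index remains, then all unused indices are negative, so $x(s) \geq \sum_{i \notin \{\pi(1),\dots,\pi(q-1)\}} x(i)$, and combining this with $\sum_i x(i) = 0$ forces $P_{q-1} + x(s) \geq 0$, a contradiction. Your extra bookkeeping (the invariant $P_{q-1}\geq 0$ and the injectivity remarks) is fine but not actually needed for the contradiction; the paper's write-up is simply a tighter version of the same argument.
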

\begin{proof}
Suppose that for some $2\leq q\leq t$, we are not in the first case of (\ref{eqn:pidefcases}), but
\[\{i : i \not\in \{\pi(1), \ldots , \pi(q-1) \} \text{\ and $x(i) >0$} \} = \emptyset.\] 
In that case,
$$ \left( \sum_{i=1}^{q-1} x(\pi(i)) \right) + x(s) \geq \left(\sum_{i=1}^{q-1} x(\pi(i)) \right) + \sum_{i \not\in \{\pi(1), \ldots , \pi(q-1)\} } x(i)  = 0$$
which means we used the first case, after all, a contradiction. 
\end{proof}

\begin{lem}
Let $x\circ \pi:[t]\to {\mathbb Z}\setminus \{0\}$ be function composition.
Then $ x \circ \pi \in {\sf Catalan}_t.$
\end{lem}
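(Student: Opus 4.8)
The plan is to show that the sequence of partial sums of $\vec{x}\circ\pi$ stays nonnegative and returns to $0$ at the end; the latter is automatic since $\pi$ is a bijection and $\sum_i x(i)=0$, so the real content is nonnegativity of every partial sum. I would argue by induction on $q$, tracking the partial sum $S_q := \sum_{i=1}^q x(\pi(i))$. The base case $S_1 = x(\pi(1)) = x(1) = x_1 \geq 0$ holds because $\vec{x}$ is generalized Catalan (its first partial sum $x_1$ is $\geq 0$, and $x_1 \neq 0$ forces $x_1 > 0$, so in fact $\pi(1)=1$ picks a positive entry). For the inductive step, assume $S_{q-1}\geq 0$. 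If the construction uses the first case of (\ref{eqn:pidefcases}), then $S_q = S_{q-1} + x(s) \geq 0$ by the very condition defining that case, and we are done. The nontrivial case is the second: $\pi(q)=s'$ with $x(s')>0$, whence $S_q = S_{q-1} + x(s') > S_{q-1} \geq 0$. So in \emph{both} cases $S_q\geq 0$, and the induction goes through almost immediately.

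Given how quick that argument is, I suspect the point of the lemma — and the place where I should be careful — is not the inequality $S_q\geq 0$ per se but making sure the construction never gets "stuck" in a way that would make $x\circ\pi$ ill-defined or would break the Catalan conditions; but the previous lemma already guarantees $\pi$ is well-defined as an element of $\mathfrak{S}_t$, so I may invoke it freely. The one subtlety worth spelling out: in the second case we need $s'<\infty$, i.e.\ there really is an unused positive index to pick. This is exactly what the well-definedness lemma established (if no unused positive index existed, the first case would have applied), so nonnegativity of $S_q$ in the second case is legitimate. I would also note that $x\circ\pi$ takes values in $\mathbb{Z}\setminus\{0\}$ because $x$ does and $\pi$ is a bijection, so $\vec{x}\circ\pi$ is genuinely a list of nonzero integers, as required by the definition of generalized Catalan.

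So the write-up is: first record that $\vec{x}\circ\pi=(x(\pi(1)),\ldots,x(\pi(t)))$ is a list of nonzero integers and that $\sum_{q=1}^t x(\pi(q)) = \sum_{i=1}^t x(i) = 0$ since $\pi$ is a bijection. Then prove $S_q\geq 0$ for all $1\leq q\leq t$ by the induction sketched above, splitting into the two cases of (\ref{eqn:pidefcases}) and observing that each case gives $S_q\geq 0$ directly (the first by its defining inequality, the second because adding a positive number to the nonnegative $S_{q-1}$ keeps it nonnegative). Combining the two displayed facts with the definition of \texttt{Catalan} yields $x\circ\pi\in{\sf Catalan}_t$. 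The main "obstacle," such as it is, is purely bookkeeping: being explicit that the second case is available (no dead end) by appeal to the preceding lemma, and that $\pi(1)=1$ with $x_1>0$ handles the base case cleanly.
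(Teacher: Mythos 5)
Your proposal is correct and follows essentially the same route as the paper: show the total is $0$ because $\pi$ is a bijection, then prove nonnegativity of partial sums by induction on $q$, splitting into the two cases of \eqref{eqn:pidefcases}, with the first case using its defining inequality and the second using $x(s')>0$. The extra remarks you add (that $s'<\infty$ in the second case by the well-definedness lemma, and that $x\circ\pi$ takes nonzero values) are harmless bookkeeping and do not change the argument.
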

\begin{proof}
Since $\pi \in \mathfrak{S}_t$ and $x\in {\sf Catalan}_t$,
\[ \sum_{i=1}^t x \circ \pi (i) = \sum_{i=1}^t x(i) = 0 .\]
It remains to show that
\[\sum_{i=1}^q x \circ \pi(i) \geq 0 \]
for all $1 \leq q \leq t$. We proceed by induction on $q\geq 1$. 
In the base case, $q=1$, $x \circ \pi(1) = x(1) \geq 0$. Now suppose
$q \geq 2$, and let $s$ and $s'$ be as in \eqref{eqn:sdef} and \eqref{eqn:s'def}. 
If $\pi(q) = s$, then
\[ \sum_{i=1}^q x \circ \pi (i) = \left( \sum_{i=1}^{q-1} x(\pi(i)) \right)+ x(s) \geq 0 . \]
If $\pi(q) = s'$, then
\[ \sum_{i=1}^q x \circ \pi(i) = \left( \sum_{i=1}^{q-1} x \circ \pi (i) \right) + x(s') > \sum_{i=1}^{q-1} x \circ \pi(i) \geq 0 . \]
This completes the induction.
\end{proof}

\begin{ex} \label{ex2}
Consider the list $\vec{x}$ in Example \ref{ex1}:
\[ \vec{x} = (5,5,4,4,-3,-3,-3,-3,-3,-1,5,5,5,3,-4,-4,-4,-4,-4). \]
A permutation $\pi \in \mathfrak{S}_t$ associated to $x$ is
\[ \pi = (1 \ 5 \ 2 \ 6 \ 7 \ 3 \ 8 \ 4 \ 9 \ 10 \ 11 \ 15 \ 12 \ 16 \ 17 \ 13 \ 18 \ 14 \ 19) \]
where $\pi$ is written in one-line notation. The sequence that corresponds to $x \circ \pi$ is
\[ \vec{w} = (5,-3,5,-3,-3,4,-3,4,-3,-1,5,-4,5,-4,-4,5,-4,3,-4), \]
which is generalized Catalan. Hence $x \circ \pi \in {\sf Catalan}_t$. Below we colored the Dyck path in Figure \ref{fig1}. The color of an edge represents the run in which an edge is contained.
\begin{center}
\begin{tikzpicture}[scale = 0.22]
\draw [->, green, thick] (0,0) -- (5,5);
\draw [->, green, thick] (5,5) -- (10,10);
\draw [->, green, thick] (10,10) -- (14,14);
\draw [->, green, thick] (14,14) -- (18,18);
\draw [->, orange, thick] (18,18) -- (21,15);
\draw [->, orange, thick] (21,15) -- (24,12);
\draw [->, orange, thick] (24,12) -- (27,9);
\draw [->, orange, thick] (27,9) -- (30,6);
\draw [->, orange, thick] (30,6) -- (33,3);
\draw [->, orange, thick] (33,3) -- (34,2);
\draw [->, purple, thick] (34,2) -- (39,7);
\draw [->, purple, thick] (39,7) -- (44,12);
\draw [->, purple, thick] (44,12) -- (49,17);
\draw [->, purple, thick] (49,17) -- (52,20);
\draw [->, blue, thick] (52,20) -- (56,16);
\draw [->, blue, thick] (56,16) -- (60,12);
\draw [->, blue, thick] (60,12) -- (64,8);
\draw [->, blue, thick] (64,8) -- (68,4);
\draw [->, blue, thick] (68,4) -- (72,0);
\draw (0,20) -- (0,0) -- (72,0);
\end{tikzpicture}
\end{center}
For each $w_i = x \circ \pi (i)$, there is a corresponding entry $x_{\pi(i)}$ of $\vec{x}$. Below is the visualization of $\vec{w}$ with the coloring inherited from $\vec{x}$.
\begin{center}
\begin{tikzpicture}[scale = 0.22]
\draw [->, green, thick] (0,0) -- (5,5);
\draw [->, orange, thick] (5,5) -- (8,2);
\draw [->, green, thick] (8,2) -- (13,7);
\draw [->, orange, thick] (13,7) -- (16,4);
\draw [->, orange, thick] (16,4) -- (19,1);
\draw [->, green, thick] (19,1) -- (23,5);
\draw [->, orange, thick] (23,5) -- (26,2);
\draw [->, green, thick] (26,2) -- (30,6);
\draw [->, orange, thick] (30,6) -- (33,3);
\draw [->, orange, thick] (33,3) -- (34,2);
\draw [->, purple, thick] (34,2) -- (39,7);
\draw [->, blue, thick] (39,7) -- (43,3);
\draw [->, purple, thick] (43,3) -- (48,8);
\draw [->, blue, thick] (48,8) -- (52,4);
\draw [->, blue, thick] (52,4) -- (56,0);
\draw [->, purple, thick] (56,0) -- (61,5);
\draw [->, blue, thick] (61,5) -- (65,1);
\draw [->, purple, thick] (65,1) -- (68,4);
\draw [->, blue, thick] (68,4) -- (72,0);
\draw (0,15) -- (0,0) -- (72,0);
\end{tikzpicture}
\end{center}
In this Dyck path, any green edge appears earlier than any purple edge, and any orange edge appears earlier than any blue edge. \qed

\end{ex}

\begin{lem} \label{prop1}
Let $1\leq i<j\leq t$. 
\begin{itemize}
    \item[(I)] If $x(i), x(j) >0$, then $\pi^{-1}(i)<\pi^{-1}(j)$.
    \item[(II)] If $x(i), x(j) <0$, then $\pi^{-1}(i)<\pi^{-1}(j)$.
    \item[(III)] If $x(i)<0$ and $x(j)>0$, then $\pi^{-1}(i)<\pi^{-1}(j)$.
\end{itemize}
\end{lem}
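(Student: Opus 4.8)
The plan is to establish (I) and (II) first by a short direct argument from the construction \eqref{eqn:pidefcases}, and then to deduce (III) from them together with a single prefix-sum inequality for the original generalized Catalan list $\vec x$.

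For (I), suppose $x(i),x(j)>0$ with $i<j$ but, for contradiction, $\pi^{-1}(j)<\pi^{-1}(i)$. Set $q=\pi^{-1}(j)$, so $\pi(q)=j$. Since $x(j)>0$, step $q$ cannot fall in the first case of \eqref{eqn:pidefcases} (that case outputs the index $s$, which has $x(s)<0$); hence $\pi(q)=s'$, the smallest positive index not among $\pi(1),\dots,\pi(q-1)$. But $i$ is a positive index not among $\pi(1),\dots,\pi(q-1)$ (as $\pi^{-1}(i)>q$), so $s'\le i<j=s'$, a contradiction. The proof of (II) is the mirror image: $x(j)<0$ forces step $q$ into the first case, so $\pi(q)=s$ is the smallest unused negative index, while $i<j$ is an unused negative index, giving $s\le i<j=s$.

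The substance is (III). From (I) and (II) I would first record the structural fact that, at every step, the set of already-chosen positive (resp.\ negative) indices is an initial segment of the positive (resp.\ negative) indices ordered by value. Now suppose $x(i)<0<x(j)$, $i<j$, and for contradiction $\pi^{-1}(j)<\pi^{-1}(i)$; put $q=\pi^{-1}(j)$. As in (I), $\pi(q)=s'=j$ is the smallest unused positive index, so by the structural fact the positive indices among $\pi(1),\dots,\pi(q-1)$ are exactly the positive $m<j$. Since $i$ is an unused negative index at step $q$, we get $s\le i<j$; in particular $s<\infty$, so the failure of the first case of \eqref{eqn:pidefcases} forces $\bigl(\sum_{k=1}^{q-1}x(\pi(k))\bigr)+x(s)<0$. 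By the structural fact again, the negative indices among $\pi(1),\dots,\pi(q-1)$ are exactly the negative $m<s$, so $\sum_{k=1}^{q-1}x(\pi(k))=\sum_{m<j,\,x(m)>0}x(m)+\sum_{m<s,\,x(m)<0}x(m)$.

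To finish, I would compare this with the prefix sum $\sum_{m=1}^{j-1}x(m)\ge 0$, which is valid because $\vec x$ is generalized Catalan and $j-1<t$. Splitting it by sign, its positive part equals $\sum_{m<j,\,x(m)>0}x(m)$, and its negative part is $\sum_{m<j,\,x(m)<0}x(m)\le \sum_{m\le s,\,x(m)<0}x(m)=x(s)+\sum_{m<s,\,x(m)<0}x(m)$, since $s<j$ and passing to the smaller index set only drops negative terms. Combining the three displayed facts gives $0\le \sum_{m=1}^{j-1}x(m)\le \bigl(\sum_{k=1}^{q-1}x(\pi(k))\bigr)+x(s)<0$, the desired contradiction. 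I expect the only delicate points to be bookkeeping: correctly reading off the used index sets at step $q$ from (I)--(II), and keeping the inequality direction right when the true set of negative indices below $j$ is replaced by the smaller set of those $\le s$.
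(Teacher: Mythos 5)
Your proof is correct and follows essentially the same route as the paper: (I) and (II) are short consequences of the greedy construction, and (III) is proved via the same structural observation (the chosen positive indices at step $q=\pi^{-1}(j)$ are exactly the positive $m<j$, and the chosen negative indices are exactly the negative $m<s$) combined with a Catalan prefix-sum inequality. The only cosmetic difference is that you invoke the prefix inequality at $q=j-1$ and argue by contradiction from $\pi^{-1}(j)<\pi^{-1}(i)$—which automatically rules out the empty-negative-set subcase the paper handles separately—whereas the paper invokes the prefix inequality at $q=s$ and argues directly that $s\geq j$.
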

\begin{proof}
If $\pi^{-1}(j) = 1$ then $\pi(1) = 1 = j$, a contradiction. Hence
$\pi^{-1}(j) >1$. Now we consider step $\pi^{-1}(j)$ of the construction for each of the cases.

(I): Here,
\[j=\min(l:l \not\in \{\pi(1), \ldots , \pi(\pi^{-1}(j)-1) \} \text{ and } x(l) >0)\] so 
\[i \in \{\pi(1), \ldots , \pi(\pi^{-1}(j)-1) \}\] 
which means that $\pi^{-1}(i) < \pi^{-1}(j)$. 

(II): Now,
\[j = \min(l : l \not\in \{\pi(1), \ldots , \pi(\pi^{-1}(j)-1) \} \text{ and } x(l) <0 )\] 
so 
\[i \in \{\pi(1), \ldots , \pi(\pi^{-1}(j)-1) \}\] 
implying $\pi^{-1}(i) < \pi^{-1}(j)$. 

(III): In this case, 
\[s = \min(l: l \not\in \{\pi(1), \ldots , \pi(\pi^{-1}(j)-1) \} \text{ and } x(l)<0).\] 
Since 
\[x(\pi(\pi^{-1}(j))) = x(j) >0,\] 
we have that 
\[\{l : l \not\in \{ \pi(1), \ldots , \pi(\pi^{-1}(j)-1) \} \text{ and } x(l)<0 \} = \emptyset\] 
or
\begin{equation}
\label{eqn:myoneeqn}
 \left( \sum_{k=1}^{\pi^{-1}(j)-1} x(\pi(k)) \right) + x(s) <0.
 \end{equation}
If 
\[\{l : l \not\in \{\pi(1), \ldots , \pi(\pi^{-1}(j)-1) \} \text{ and } x(l)<0 \} = \emptyset,\] 
we see that
\[i \in \{\pi(1), \ldots , \pi(\pi^{-1}(j)-1) \}\]
which implies $\pi^{-1}(i) < \pi^{-1}(j)$ as desired. 

Suppose (\ref{eqn:myoneeqn}) holds. By \eqref{eqn:sdef},
$$\{l \in \{\pi(1), \ldots , \pi(\pi^{-1}(j)-1) \} : x(l) <0 \} = [s-1]\cap \{h \ : \ x(h)<0 \}.$$
By \eqref{eqn:s'def},
$$\{l \in \{\pi(1), \ldots , \pi(\pi^{-1}(j)-1) \} : x(l) >0 \} = [j-1] \cap \{h \ : \ x(h)>0 \}.$$
If $s < j$,
$$\left( \sum_{k=1}^{\pi^{-1}(j)-1} x(\pi(k)) \right) + x(s) = \sum_{h=1}^s x(h) + \sum_{k \in \{s+1, \ldots , j-1 \} \cap \{h : x(h) >0 \}} x(k) \geq \sum_{h=1}^s x(h) \geq 0,$$
a contradiction. Thus $s \geq j (>i)$. This means 
\[i \in \{\pi(1), \ldots , \pi(\pi^{-1}(j)-1) \},\] 
by \eqref{eqn:sdef}. Therefore, $\pi^{-1}(i) < \pi^{-1}(j)$, as desired.  
\end{proof}

Let $T=\{i_1, \ldots , i_a \} \subset [t]$, where $i_1 < i_2 < \ldots < i_a$. A map $w:T \rightarrow \mathbb{Z} \setminus \{0 \}$ is \emph{generalized Catalan} if $(w(i_1), \ldots , w(i_a))$ is a generalized Catalan list. Let ${\sf Catalan}_T$ be the set of such maps (hence,
in particular ${\sf Catalan}_{[t]}={\sf Catalan}_t$).

\begin{prop} \label{lem1}
Let $T \subset [t]$. If 
\[(x \circ \pi)|_T : T \rightarrow \mathbb{Z} \setminus \{0 \}\in {\sf Catalan}_T\] 
then 
\[x|_{\pi(T)} : \pi(T) \rightarrow \mathbb{Z} \setminus \{0  \}\in {\sf Catalan}_{\pi(T)}.\]
\end{prop}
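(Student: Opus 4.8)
The plan is to verify the two conditions defining ${\sf Catalan}_{\pi(T)}$ for the list $x|_{\pi(T)}$ directly, converting information about the order of appearance of entries in $(x\circ\pi)|_T$ into information about the order of appearance in $x|_{\pi(T)}$ by means of Lemma~\ref{prop1}. Write $T=\{i_1<\cdots<i_a\}$ and set $A_\ell:=x(\pi(i_\ell))$, so that $(A_1,\dots,A_a)$ is precisely the list $(x\circ\pi)|_T$, which by hypothesis lies in ${\sf Catalan}_T$. The sum condition for $x|_{\pi(T)}$ is immediate, since $\sum_{j\in\pi(T)}x(j)=\sum_{\ell}A_\ell=0$. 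For the partial sums, fix $1\le q\le a$ and let $S_q\subseteq\pi(T)$ be the set of the $q$ smallest elements of $\pi(T)$; writing $m:=\max S_q$, we have $S_q=\pi(T)\cap[m]$, and the task is to show $\sum_{j\in S_q}x(j)\ge 0$. Let $\rho$ be the index with $\pi(i_\rho)=m$ (so that $A_\rho=x(m)$), and let $V:=\{\pi(i_1),\dots,\pi(i_\rho)\}$ be the set of values occurring among the first $\rho$ entries of $A$.

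The heart of the matter is the claim that every $j\in S_q\setminus V$ satisfies $x(j)>0$, while every $j\in V\setminus S_q$ satisfies $x(j)<0$. Granting this,
\[
\sum_{j\in S_q}x(j)=\sum_{j\in V}x(j)+\sum_{j\in S_q\setminus V}x(j)-\sum_{j\in V\setminus S_q}x(j)\ \ge\ \sum_{j\in V}x(j)=\sum_{\ell=1}^{\rho}A_\ell\ \ge\ 0,
\]
where the final inequality holds because $(A_1,\dots,A_a)$ is generalized Catalan. To prove the claim, note that every $j\in V$ has $\pi^{-1}(j)\le\pi^{-1}(m)$, whereas every $j\in\pi(T)\setminus V$ has $\pi^{-1}(j)>\pi^{-1}(m)$; moreover $j<m$ for $j\in S_q\setminus V$, while $j>m$ for $j\in V\setminus S_q$. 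Now apply Lemma~\ref{prop1} to the pair $\{j,m\}$ listed in increasing order: three of the four possible sign patterns of $\big(x(j),x(m)\big)$ would force $\pi^{-1}$ to order $j$ and $m$ oppositely to what was just noted, a contradiction, and the one surviving pattern pins down the sign of $x(j)$ as asserted.

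The main obstacle I anticipate is keeping this argument uniform. It is tempting to split on the sign of $x(m)$ — one then gets $V\subseteq S_q$ in one case and $S_q\subseteq V$ in the other — and to run the two sub-arguments separately; passing instead through the symmetric difference $S_q\triangle V$ and letting Lemma~\ref{prop1} rule out the bad sign patterns avoids this duplication. It then remains only to handle the degenerate configurations, such as $q=a$, or $S_q$ consisting entirely of indices of a single sign; these are either trivial or are already excluded by the fact that $(A_1,\dots,A_a)$ is generalized Catalan.
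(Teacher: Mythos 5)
Your proof is correct, and it is essentially the same argument as the paper's: both bound each prefix sum of $x|_{\pi(T)}$ (taken in position order) below by the corresponding prefix sum of $(x\circ\pi)|_T$ (taken in construction order), using Lemma~\ref{prop1} to control the signs of the entries on which the two orderings disagree. The paper carries this out by splitting on the sign of the last element $x(\pi(k))$ of the position-order prefix and invoking parts (I)--(III) of Lemma~\ref{prop1} accordingly, whereas you phrase the same observation uniformly through the symmetric difference $S_q\triangle V$; the underlying content is identical.
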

\begin{proof}
Observe that $x|_{\pi(T)} : \pi(T) \rightarrow \mathbb{Z} \setminus \{0 \}$ maps $\pi(i) \mapsto x(\pi(i))$. For each $k \in T$, consider
$$ {\mathcal S}:=\sum_{h\in T : \pi(h) < \pi(k)} x(\pi(h)).$$
By definition of ${\sf Catalan}_{\pi(T)}$, it suffices to show ${\mathcal S}\geq 0$ for all $k \in T$.

Fix $k \in T$. Suppose $x(\pi(k)) >0$. For $h\in T$ such that $x(\pi(h)) >0$, by Lemma \ref{prop1}(I), 
\[\pi(h) < \pi(k)\iff h<k.\] 
This means that
$$ \sum_{h \in T : \pi(h) < \pi(k), x(\pi(h))>0} x(\pi(h)) = \sum_{h \in T : h<k, x(\pi(h))>0} x(\pi(h)).$$
For $h \in T$ such that $x(\pi(h))<0$, by Lemma \ref{prop1}(III), we have that 
\[ \pi(h)<\pi(k) \Rightarrow h<k;\] 
that is,
$$ \sum_{h \in T: \pi(h)<\pi(k), x(\pi(h))<0} x(\pi(h)) \geq \sum_{h \in T : h<k, x(\pi(h))<0} x(\pi(h)).$$
Therefore, 
$$ \sum_{h \in T : \pi(h) < \pi(k)} x(\pi(h)) \geq \sum_{h \in T : h<k} x(\pi(h)) \geq 0,$$
as desired. The last inequality holds because $(x \circ \pi)|_T \in {\sf Catalan}_T$.

Suppose $x(\pi(k))<0$. For $h \in T$ such that $x(\pi(h))<0$, by Lemma \ref{prop1}(II), 
\[\pi(h) < \pi(k)  \iff  h<k.\] 
This means that
$$ \sum_{h \in T : \pi(h) < \pi(k), x(\pi(h))<0} x(\pi(h)) = \sum_{h \in T : h<k, x(\pi(h)) <0} x(\pi(h)).$$
For $h \in T$ such that $x(\pi(h)) >0$, by Lemma \ref{prop1}(III), 
\[ \pi(k)<\pi(h) \Rightarrow k<h. \]
Equivalently, for $h \in T$ such that $x(\pi(h))>0$,
\[ h<k \Rightarrow \pi(h) < \pi(k). \]
This implies that
$$ \sum_{h \in T: \pi(h) < \pi(k), x(\pi(h))>0} x(\pi(h)) \geq \sum_{h \in T : h<k, x(\pi(h))>0} x(\pi(h)).$$
Hence,
$$ \sum_{h \in T : \pi(h) < \pi(k)} x(\pi(h)) \geq \sum_{h \in T : h<k} x(\pi(h)) \geq 0$$
as desired. The last inequality holds because $(x \circ \pi)|_T\in {\sf Catalan}_T$.
\end{proof}

\begin{ex}
Let $\vec{x}$ and $\vec{w}$ be as in Example \ref{ex2}, and let 
\[ T = \{ 3,4,6,9,10,11,12,13,14,15 \} \subset [19]. \]
Then $(x \circ \pi)|_T \in {\sf Catalan}_T$ because the corresponding list 
\[ \vec{w}^{\bullet}=(5,-3,4,-3,-1,5,-4,5,-4,-4) \]
is Catalan. Below is the visualization of $\vec{w}^{\bullet}$ as a sublist of $\vec{w}$, where red edges represent the entries of $\vec{w}^{\bullet}$.
\begin{center}
\begin{tikzpicture}[scale = 0.22]
\draw [->, gray] (0,0) -- (5,5);
\draw [->, gray] (5,5) -- (8,2);
\draw [->, red, thick] (8,2) -- (13,7);
\draw [->, red, thick] (13,7) -- (16,4);
\draw [->, gray] (16,4) -- (19,1);
\draw [->, red, thick] (19,1) -- (23,5);
\draw [->, gray] (23,5) -- (26,2);
\draw [->, gray] (26,2) -- (30,6);
\draw [->, red, thick] (30,6) -- (33,3);
\draw [->, red, thick] (33,3) -- (34,2);
\draw [->, red, thick] (34,2) -- (39,7);
\draw [->, red, thick] (39,7) -- (43,3);
\draw [->, red, thick] (43,3) -- (48,8);
\draw [->, red, thick] (48,8) -- (52,4);
\draw [->, red, thick] (52,4) -- (56,0);
\draw [->, gray] (56,0) -- (61,5);
\draw [->, gray] (61,5) -- (65,1);
\draw [->, gray] (65,1) -- (68,4);
\draw [->, gray] (68,4) -- (72,0);
\draw (0,15) -- (0,0) -- (72,0);
\end{tikzpicture}
\end{center}
Since
\[ \pi(T) = \{2,3,6,9,10,11,12,15,16,17 \}, \]
the list that corresponds to $x |_{\pi(T)}$ is
\[ \vec{x}^{\bullet} = (5,4,-3,-3,-1,5,5,-4,-4,-4) \]
and it is Catalan. Hence $x|_{\pi(T)} \in {\sf Catalan}_{\pi(T)}$. Below is the visualization of $\vec{x}^{\bullet}$ as a sublist of $\vec{x}$, where red edges represent the entries of $\vec{x}^{\bullet}$.
\begin{center}
\begin{tikzpicture}[scale = 0.22]
\draw [->, gray] (0,0) -- (5,5);
\draw [->, red, thick] (5,5) -- (10,10);
\draw [->, red, thick] (10,10) -- (14,14);
\draw [->, gray] (14,14) -- (18,18);
\draw [->, gray] (18,18) -- (21,15);
\draw [->, red, thick] (21,15) -- (24,12);
\draw [->, gray] (24,12) -- (27,9);
\draw [->, gray] (27,9) -- (30,6);
\draw [->, red, thick] (30,6) -- (33,3);
\draw [->, red, thick] (33,3) -- (34,2);
\draw [->, red, thick] (34,2) -- (39,7);
\draw [->, red, thick] (39,7) -- (44,12);
\draw [->, gray] (44,12) -- (49,17);
\draw [->, gray] (49,17) -- (52,20);
\draw [->, red, thick] (52,20) -- (56,16);
\draw [->, red, thick] (56,16) -- (60,12);
\draw [->, red, thick] (60,12) -- (64,8);
\draw [->, gray] (64,8) -- (68,4);
\draw [->, gray] (68,4) -- (72,0);
\draw (0,20) -- (0,0) -- (72,0);
\end{tikzpicture}
\end{center}
\qed
\end{ex}

\noindent
\emph{Proof of Theorem~\ref{conj1}:}
A run is an \emph{up-run} if it consists of positive integers. Similarly, a run is a \emph{down-run} if it consists of negative integers. Let $U_i \subset [t]$ be the set of indices of the elements in the $i$th up-run of $\vec{x}$, and let $D_j \subset [t]$ be the set of indices of the elements in the $j$th down-run of $\vec{x}$ $(1 \leq i, j \leq y)$. Let $\alpha_i$ be the maximum of the elements in the $i$th up-run of $\vec{x}$ and let $\beta_j$ be the maximum (in absolute value) of the elements in the $j$th down-run of $\vec{x}$.

Let 
\[\gamma_i := \min(l : \pi(l) \in U_i) \text{\ and $\delta_j := \max(l : \pi(l) \in D_j)$.}\] 
The \emph{$i$-th up-phase} $\Phi_i \subset [t]$ is 
\[ \Phi_i := \{\gamma_i, \gamma_i+1, \ldots , \gamma_{i+1}-1 \} \text{\ if $i<y$.} \]
The \emph{$j$-th down-phase} $\Psi_j \subset [t] \cup \{0 \}$ is 
\[ \Psi_j := \{\delta_{j-1}, \delta_{j-1}+1, \ldots , \delta_j - 1 \} \text{\ if $j>1$.}\]
Also, we set $\Phi_y = \{\gamma_{y}, \ldots , t\}$ and $\Psi_1 = \{0, 1, 2, \ldots , \delta_1 - 1\}$.
Notice that $\{\Phi_i:1\leq i\leq y\}$ is a set-partition of $[t]$. Also, 
$\{\Psi_j: 1\leq j\leq y\}$ is a set-partition of $[t-1] \cup \{0 \}$.  

\begin{prop} \label{obs1}
 If $h \in \Phi_i$ and $x(\pi(h))<0$ then $\sum_{k=1}^h x(\pi(k)) \in [0, \alpha_i)$.
\end{prop}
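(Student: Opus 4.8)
Since $x\circ\pi\in{\sf Catalan}_t$ (proved above), all partial sums of $x\circ\pi$ are nonnegative, so the lower bound $\sum_{k=1}^h x(\pi(k))\ge 0$ is free. Abbreviate $P_\ell:=\sum_{k=1}^\ell x(\pi(k))$; the real task is the strict inequality $P_h<\alpha_i$. The plan is to trace back to the most recent positive addition at or before step $h$ and control the partial sum immediately after it.

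First I would note that $h>\gamma_i$: by definition $\pi(\gamma_i)\in U_i$, so $x(\pi(\gamma_i))>0$, whereas $x(\pi(h))<0$. Let $h^\star$ be the largest index in $\{\gamma_i,\dots,h\}$ with $x(\pi(h^\star))>0$ --- this is well-defined since $x(\pi(\gamma_i))>0$ --- and note $h^\star<h$. The first key claim is that $\pi(h^\star)\in U_i$, hence $x(\pi(h^\star))\le\alpha_i$. Here I would use that $h^\star\in\Phi_i$, that all indices of $U_1\cup\cdots\cup U_{i-1}$ have already been used by step $\gamma_i\le h^\star$ (the positive index added at step $\gamma_i$ is the least unused positive one, and it lies in $U_i$, by Lemma~\ref{prop1}(I)), and that an index of a later up-run cannot be added before step $\gamma_{i+1}>h^\star$; so the positive index $\pi(h^\star)$ must lie in $U_i$.

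Next, by maximality of $h^\star$ every step $h^\star+1,\dots,h$ adds a negative value, so $P_h\le P_{h^\star+1}$, and it remains to show $P_{h^\star+1}<\alpha_i$. Since step $h^\star+1$ adds a negative value it uses the first case of \eqref{eqn:pidefcases}, so $\pi(h^\star+1)=s$ with $s$ the least unused negative index at step $h^\star+1$; as step $h^\star$ added a positive index, $s$ is equally the least unused negative index at step $h^\star$. If $h^\star\ge 2$, then because step $h^\star$ added a positive value, the ``otherwise'' case of \eqref{eqn:pidefcases} was taken there, forcing $P_{h^\star-1}+x(s)<0$ (here $s<\infty$ is witnessed by $\pi(h^\star+1)$); if $h^\star=1$, then $P_{h^\star-1}=P_0=0$ and $x(s)<0$ give the same inequality. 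Either way,
\[
P_{h^\star+1}=\bigl(P_{h^\star-1}+x(s)\bigr)+x(\pi(h^\star))<x(\pi(h^\star))\le\alpha_i .
\]

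The only real obstacle is the bookkeeping behind the first key claim $\pi(h^\star)\in U_i$: it combines Lemma~\ref{prop1}(I) (positive values appear in $x\circ\pi$ in their original order, so the up-runs get processed one at a time) with the definitions of the $\gamma$'s and of the up-phases $\Phi_i$. Everything else is a direct reading of \eqref{eqn:pidefcases}. As an alternative one could argue by induction on $h$: if $\pi(h-1)<0$ then $P_h<P_{h-1}<\alpha_i$ by the inductive hypothesis (note $h-1\in\Phi_i$ since $h>\gamma_i$), and if $\pi(h-1)>0$ one repeats essentially the computation above with $h-1$ in the role of $h^\star$.
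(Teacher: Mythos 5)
Your proof is correct and uses essentially the same idea as the paper: the key computation at the last positive step $h^\star$ (you) plays exactly the role of the minimal-$h$ contradiction at step $h-1$ (the paper), both hinging on $\pi(h^\star)\in U_i$ so that $x(\pi(h^\star))\le\alpha_i$, together with the greediness of the construction at that step. You phrase it as a direct argument and make explicit two points the paper leaves implicit (that the positive index used inside $\Phi_i$ must lie in $U_i$, and the boundary case $h^\star=1$), but the structure and content are the same; your closing ``alternative by induction'' paragraph is literally the paper's minimal-counterexample argument rephrased.
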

\begin{proof}
 Suppose $\sum_{k=1}^h x(\pi(k)) \geq \alpha_i$; assume $h$ is minimal with this property.
Then we have that $x(\pi(h-1)) > 0$. Since 
\[ \pi(h) = \min(l : l \not\in \{\pi(1), \ldots , \pi(h-1) \} \text{ and } x(l) <0 \}\]
 by definition of $\pi(h)$, we also have 
\[ \pi(h) = \min(l : l \not\in \{\pi(1), \ldots , \pi(h-2) \} \text{ and } x(l)<0 \}. \]
However, this means that
$$ \left( \sum_{k=1}^{h-2} x(\pi(k)) \right) + x(\pi(h)) \geq \alpha_i - x(\pi(h-1)) \geq 0$$
which implies that we should have chosen the negative element $x(\pi(h))$ instead of $x(\pi(h-1))$ at step $h-1$. This contradicts the construction of $\pi$.
\end{proof}

\begin{prop} \label{obs2}
If $h \in \Psi_j$ and $x(\pi(h+1)) >0$ then $\sum_{k=1}^h x(\pi(k)) \in [0, \beta_j)$.
\end{prop}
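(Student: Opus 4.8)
The plan is to argue by contradiction on the upper bound, examining step $h+1$ of the construction of $\pi$ — this is the dual of the proof of Proposition~\ref{obs1}, which examined step $h-1$. The lower bound needs nothing: for $h\geq 1$, $\sum_{k=1}^h x(\pi(k))\geq 0$ is part of the assertion $x\circ\pi\in{\sf Catalan}_t$ proved earlier, and for $h=0$ the sum is empty. Unlike the proof of Proposition~\ref{obs1}, I do not expect to need an extremal choice of $h$, because the sign of $\pi(h+1)$ is handed to us by hypothesis rather than being something we must locate. So the whole content is: assuming $\sum_{k=1}^h x(\pi(k))\geq\beta_j$, derive a contradiction.

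The first step I would carry out is a structural consequence of Lemma~\ref{prop1}(II): since $\pi$ inserts the negative entries in increasing order of their indices, and the down-runs $D_1,\ldots,D_y$ occupy consecutive index-increasing blocks, every element of $D_1\cup\cdots\cup D_{j-1}$ is placed at a step $\leq\delta_{j-1}$ and every element of $D_{j+1}\cup\cdots\cup D_y$ at a step $>\delta_j$. Now feed in $h\in\Psi_j$, i.e.\ $\delta_{j-1}\leq h\leq\delta_j-1$ (with the obvious reading for $j=1$, where there is no $D_{j-1}$): since $\delta_j>h$, the entry $\pi(\delta_j)\in D_j$ is still unplaced at the start of step $h+1$, and since $\delta_{j-1}\leq h$, no element of $D_1\cup\cdots\cup D_{j-1}$ is unplaced then. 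Hence, at step $h+1$, the index $s$ of \eqref{eqn:sdef} is finite, and in fact $s\in D_j$: it is the \emph{smallest} unplaced negative index, the unplaced negatives all lie in $D_j\cup\cdots\cup D_y$, and the indices of $D_j$ all precede those of $D_{j+1},\ldots,D_y$. In particular $|x(s)|\leq\beta_j$ by the definition of $\beta_j$.

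The final step is short: $x(\pi(h+1))>0$ means that at step $h+1$ the construction used the second branch of \eqref{eqn:pidefcases}, setting $\pi(h+1)=s'$; since $s<\infty$, this is possible only if $\bigl(\sum_{k=1}^h x(\pi(k))\bigr)+x(s)<0$, whence $|x(s)|>\sum_{k=1}^h x(\pi(k))\geq\beta_j$, contradicting $|x(s)|\leq\beta_j$. I expect the only delicate part to be the bookkeeping of the middle paragraph — converting the hypothesis $h\in\Psi_j$, which is a statement about the \emph{steps} $\delta_{j-1},\delta_j$ of the construction, into the conclusion that the smallest currently-available negative index sits in the $j$-th down-run $D_j$; the rest is a direct unwinding of \eqref{eqn:pidefcases}. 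The edge cases $j=1$ and $h=0$ I would dispose of in passing, since they do not affect the argument.
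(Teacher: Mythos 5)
Your proof is correct and takes essentially the same route as the paper: identify the smallest currently-available negative index $s$ at step $h+1$, observe that it lies in $D_j$ so $|x(s)|\leq\beta_j$, and use the fact that the construction chose $s'$ rather than $s$ to deduce $\sum_{k=1}^h x(\pi(k)) < |x(s)| \leq \beta_j$. The paper asserts $s\in D_j$ in one line where you supply the bookkeeping via Lemma~\ref{prop1}(II), and you phrase the final step as a contradiction where the paper states it directly, but these are cosmetic differences; your observation that no extremal choice of $h$ is needed here (unlike in Proposition~\ref{obs1}) matches the paper's proof as well.
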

\begin{proof}
Since $h \in \Psi_j$, we get 
\[ s:= \min(l : l \not\in \{\pi(1), \ldots , \pi(h) \} \text{ and } x(l) <0 ) \in D_j. \]
Since $x(\pi(h+1)) >0$, we have that
$ \left( \sum_{k=1}^h x(\pi(k)) \right) + x(s) < 0$ which means that
$$ \sum_{k=1}^h x(\pi(k)) < -x(s) = |x(s)| \leq \beta_j$$
as desired. Thus $\sum_{k=1}^h x(\pi(k)) \in [0, \beta_j)$.
\end{proof}

Let $u_i := | \{h \in \Phi_i : x(\pi(h))<0 \}|$ and $d_j := | \{h \in \Psi_j : x(\pi(h+1)) >0 \}|$. Since
$\{\Phi_i:1\leq i\leq y\}$ is a set-partition of $[t]$, 
\[ \sum_{i=1}^y u_i = |\{h \in [t] : x(\pi(h))<0 \}|.\]
Similarly, since $\{\Psi_j: 1\leq j\leq y\}$ is a set-partition of $[t-1]\cup \{0\}$,
\[ \sum_{j=1}^y d_j = | \{h \in [t-1] \cup \{0 \} : x(\pi(h+1)) >0 \} | = | \{h \in [t] : x(\pi(h)) >0 \} |. \]
The previous two sentences show that
\begin{equation} \label{eqn:costwidth}
\sum_{i=1}^y u_i + \sum_{j=1}^y d_j = t = \width(\vec{x}) > \cost(\vec{x}) = \sum_{i=1}^y \alpha_i + \sum_{j=1}^y \beta_j.
\end{equation}
Hence, there exists $i$ such that $u_i > \alpha_i$ or there exists $j$ such that $d_j > \beta_j$.
It remains to analyze these two cases.

\noindent \emph{Case A} (There exists $i$ such that $u_i > \alpha_i$):

For each $h \in \Phi_i$ such that $x(\pi(h))<0$, we have 
\[ \sum_{k=1}^h x(\pi(k)) \in [0, \alpha_i)\]
due to Proposition \ref{obs1}. Since 
\[ u_i = | \{h \in \Phi_i : x(\pi(h)) <0 \}| > \alpha_i , \]
by pigeonhole, there exists $h_1, h_2 \in \Phi_i$ such that $h_1<h_2$, 
\[ x(\pi(h_1)), x(\pi(h_2))<0, \]
and
$$ \sum_{k=1}^{h_1} x(\pi(k)) = \sum_{k=1}^{h_2} x(\pi(k)).$$

Recall that $ \gamma_i = \min(l : \pi(l) \in U_i)$ and  $\gamma_i = \min(\Phi_i)$. 
Suppose $k \in \{h_1+1, \ldots , h_2 \}$ satisfies $x(\pi(k))<0$. Since 
\[ \gamma_i < k, \ x(\pi(\gamma_i)) >0, \text{ and } x(\pi(k)) <0, \] by Lemma \ref{prop1} (III), we get $\pi(\gamma_i) < \pi(k)$. Since $\pi(\gamma_i) \in U_i$, this implies that $\max(U_i) < \pi(k)$.
On the other hand, suppose $l \in \{h_1 +1, \ldots , h_2 \}$ satisfies $x(\pi(l))>0$. Since $l \in \Phi_i$, we have that $\pi(l) \in U_i$, which means that $\pi(l) \leq \max(U_i)$. 

Therefore, for any $k, l \in \{h_1+1, \ldots, h_2 \}$ such that $x(\pi(k))<0$ and $x(\pi(l))>0$, 
\begin{equation} \label{eqn:posneg}
\pi(l) \leq \max(U_i) < \pi(k).
\end{equation}
Let $T_1 = \{h_1+1, \ldots , h_2 \}$ and $T_2 = [t] \setminus T_1 = \{1, \ldots , h_1 \} \cup \{h_2+1, \ldots , t\}$. 

Consider the sublist of $\vec{x}$ that corresponds to the map $x|_{\pi(T_1)} : \pi(T_1) \rightarrow \mathbb{Z} \setminus \{0\}$. In this sublist, by (\ref{eqn:posneg}), any positive element appears earlier than any negative element. Therefore, this sublist consists of one up-run and one down-run, which is generalized Catalan.

Next, consider the map $x|_{\pi(T_2)} : \pi(T_2) \rightarrow \mathbb{Z} \setminus \{0 \}$. Since $x \circ \pi\in {\sf Catalan}_t$, 
\[ \sum_{k=1}^r x(\pi(k)) \geq 0 \]
 for all $r \leq h_1$ and 
\[ \sum_{k=1}^{h_1} x(\pi(k)) + \sum_{k=h_2+1}^r x(\pi(k)) = \sum_{k=1}^r x(\pi(k)) \geq 0 \]
for all $r \geq h_2+1$ which means that $(x \circ \pi)|_{T_2}\in {\sf Catalan}_{T_2}$. Thus by Proposition \ref{lem1}, $x|_{\pi(T_2)}\in {\sf Catalan}_{\pi(T_2)}$.

Since $T_1 \cup T_2 = [t]$, we have that $\pi(T_1) \cup \pi(T_2) = [t]$. Therefore, two sublists of $\vec{x}$ that correspond to the maps $x|_{\pi(T_1)}$ and $x|_{\pi(T_2)}$ are complement to each other and are both generalized Catalan. Hence $\vec{x}$ is reducible.

\noindent \emph{Case B} (There exists $j$ such that $d_j > \beta_j$):

For each $h \in \Psi_j$ such that $x(\pi(h+1))>0$,  
\[ \sum_{k=1}^h x(\pi(k)) \in [0, \beta_j),\] 
by Proposition \ref{obs2}. Since 
\[ d_j = |\{h \in \Psi_j : x(\pi(h+1)) >0 \}| > \beta_j,\]
 by pigeonhole, there exists $h_1, h_2 \in \Psi_j$ such that $h_1<h_2$, 
\[ x(\pi(h_1+1)), x(\pi(h_2+1)) >0, \]
and
$$ \sum_{k=1}^{h_1} x(\pi(k)) = \sum_{k=1}^{h_2} x(\pi(k)).$$

Recall that $\delta_j = \max(l : \pi(l) \in D_j)$ and $\max(\Psi_j) = \delta_j-1$. 
Suppose $k \in \{h_1+1, \ldots , h_2 \}$ satisfies $x(\pi(k)) >0$. Since 
\[ k<\delta_j, \ x(\pi(\delta_j))<0, \text{ and } x(\pi(k))>0, \]
by Lemma \ref{prop1} (III), we have $\pi(\delta_j) > \pi(k)$. Since $\pi(\delta_j) \in D_j$, this implies that $\min(D_j) > \pi(k)$. On the other hand, suppose $l \in \{h_1+1, \ldots , h_2 \}$ satisfies $x(\pi(l)) <0$. Since $l \in \Psi_j$ and $l \geq h_1+1 > \delta_{j-1}$, we have that $\pi(l) \in D_j$, which means that $\pi(l) \geq \min(D_j)$.

Therefore, for any $k, l \in \{h_1+1, \ldots , h_2 \}$ such that $x(\pi(k))>0$ and $x(\pi(l))<0$, 
\begin{equation} \label{eqn:posneg2}
\pi(l) \geq \min(D_j) > \pi(k) .
\end{equation}

Let $T_1 = \{h_1+1, \ldots , h_2 \}$ and $T_2 = [t] \setminus T_1 = \{1, \ldots , h_1 \} \cup \{h_2+1, \ldots , t\}$. 

Consider the sublist of $\vec{x}$ that corresponds to the map $x|_{\pi(T_1)} : \pi(T_1) \rightarrow \mathbb{Z} \setminus \{0 \}$.  In this sublist, by (\ref{eqn:posneg2}), any positive element appears earlier than any negative element. Thus, this sublist consists of one up-run and one down-run, and therefore it is generalized Catalan.

Next, consider the map $x|_{\pi(T_2)} : \pi(T_2) \rightarrow \mathbb{Z} \setminus \{0 \}$. Since $x \circ \pi\in {\sf Catalan}_t$, 
\[ \sum_{k=1}^r x(\pi(k)) \geq 0 \] for all $r \leq h_1$ and 
\[ \sum_{k=1}^{h_1} x(\pi(k)) + \sum_{k=h_2+1}^r x(\pi(k)) = \sum_{k=1}^r x(\pi(k)) \geq 0 \]
 for all $r \geq h_2+1$ which means that $(x \circ \pi)_{T_2}\in {\sf Catalan}_{T_2}$. Thus by Proposition \ref{lem1}, $x|_{\pi(T_2)}\in {\sf Catalan}_{\pi(T_2)}$. 

Since $T_1 \cup T_2 = [t]$, $\pi(T_1) \cup \pi(T_2) = [t]$. Therefore, two sublists of $\vec{x}$ that correspond to the maps $x|_{\pi(T_1)}$ and $x|_{\pi(T_2)}$ are complementary and are both generalized Catalan. Hence $\vec{x}$ is reducible. \qed

\section{The case ${\sf cost}(\vec x)={\sf width}(\vec x)$}\label{sec:3}
\begin{thm}\label{thm:equality}
If $\cost(\vec{x}) = \width(\vec{x})$ and $y>1$ then $\vec{x}$ is reducible.
\end{thm}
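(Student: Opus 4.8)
The plan is to rerun the proof of Theorem~\ref{conj1}, keeping track of exactly where the strict inequality $\cost(\vec x)<\width(\vec x)$ entered. Construct $\pi$ and the up- and down-phases $\Phi_i,\Psi_j$ as in Section~\ref{sec:2}. The identity \eqref{eqn:costwidth} now becomes
\[
\sum_{i=1}^y u_i+\sum_{j=1}^y d_j=\width(\vec x)=\cost(\vec x)=\sum_{i=1}^y\alpha_i+\sum_{j=1}^y\beta_j,
\]
so we can no longer conclude that some $u_i>\alpha_i$ or some $d_j>\beta_j$. The first point is that Cases~A and~B of the proof of Theorem~\ref{conj1} do not actually require those strict inequalities: each uses only the existence of indices $h_1<h_2$ in a common up-phase $\Phi_i$ with $x(\pi(h_1)),x(\pi(h_2))<0$ and $\sum_{k=1}^{h_1}x(\pi(k))=\sum_{k=1}^{h_2}x(\pi(k))$ (Case~A), or the mirror-image statement in a common down-phase $\Psi_j$ (Case~B). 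So if such a coincident pair exists in any phase, then $\vec x$ is reducible by the argument already given.

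It remains to treat the case in which no such coincidence occurs. Then, by Proposition~\ref{obs1}, for each $i$ the assignment
\[
\{h\in\Phi_i:x(\pi(h))<0\}\longrightarrow\{0,1,\ldots,\alpha_i-1\},\qquad h\mapsto\sum_{k=1}^h x(\pi(k)),
\]
is injective, whence $u_i\le\alpha_i$; symmetrically, by Proposition~\ref{obs2}, $d_j\le\beta_j$ for all $j$. In view of the displayed equality this forces $u_i=\alpha_i$ for every $i$ and $d_j=\beta_j$ for every $j$, so each of the maps above is in fact a bijection. Now the hypothesis $y>1$ is used: a second up-run exists, so $\gamma_2$ is defined with $2\le\gamma_2\le t$ and $\Phi_1=\{1,2,\ldots,\gamma_2-1\}$, and $u_1=\alpha_1\ge1$. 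Surjectivity of the map for $i=1$ yields an index $h\in\Phi_1$ with $x(\pi(h))<0$ and $\sum_{k=1}^h x(\pi(k))=0$; moreover $1\le h\le\gamma_2-1<t$, so this return of $x\circ\pi$ to $0$ occurs strictly inside $[t]$.

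To conclude, split $[t]=T_1\sqcup T_2$ with $T_1=\{1,\ldots,h\}$ and $T_2=\{h+1,\ldots,t\}$, both nonempty. Since $x\circ\pi\in{\sf Catalan}_t$ and its $h$th partial sum vanishes, the partial sums of $(x\circ\pi)|_{T_1}$ are all $\ge0$ with total $0$, and each partial sum of $(x\circ\pi)|_{T_2}$ equals the corresponding partial sum of $x\circ\pi$, hence is $\ge0$, again with total $0$; thus $(x\circ\pi)|_{T_1}\in{\sf Catalan}_{T_1}$ and $(x\circ\pi)|_{T_2}\in{\sf Catalan}_{T_2}$. By Proposition~\ref{lem1}, $x|_{\pi(T_1)}$ and $x|_{\pi(T_2)}$ are both generalized Catalan, and as $\pi$ is a bijection, $\pi(T_1)$ and $\pi(T_2)$ are complementary nonempty subsets of $[t]$. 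The two associated sublists of $\vec x$ therefore witness that $\vec x$ is reducible.

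I expect the one genuinely new ingredient — and hence the main obstacle — to be the middle paragraph: recognizing that when no partial-sum coincidence occurs, the equality $\cost(\vec x)=\width(\vec x)$ must be ``used up'' by forcing each partial-sum map to be a bijection, and then that $y>1$ is precisely what is needed to locate the resulting zero of $x\circ\pi$ strictly before position $t$. (For $y=1$ one has $\Phi_1=[t]$ and this last step breaks down, consistent with reducibility failing there.) Everything else is a direct reuse of the Theorem~\ref{conj1} machinery: Propositions~\ref{obs1}, \ref{obs2}, \ref{lem1}, and the splitting argument of Cases~A and~B.
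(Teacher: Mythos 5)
Your proof is correct and follows essentially the same approach as the paper: rerun the Theorem~\ref{conj1} machinery, observe that Cases A and B need only a coincident pair of partial sums rather than the strict inequality $u_i>\alpha_i$ (or $d_j>\beta_j$), and use $y>1$ together with Propositions~\ref{obs1}, \ref{obs2}, and~\ref{lem1} to split $[t]$ at an interior zero of the running sum of $x\circ\pi$ lying in $\Phi_1$. The only difference from the paper is organizational: the paper case-splits on whether a zero partial sum occurs in $\Phi_1$ and invokes pigeonhole in the negative case, whereas you case-split on whether a coincidence occurs anywhere and deduce the zero from surjectivity of the partial-sum map on $\Phi_1$; the two are the same counting argument packaged differently.
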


The hypothesis $y>1$ cannot be dispensed with. For example, $\vec x=(2,-1,-1)$ is not reducible. 
\begin{proof}
The proof is the same as that of Theorem \ref{conj1}, until (\ref{eqn:costwidth}), which we replace with 
\[ \sum_{i=1}^y u_i + \sum_{j=1}^y d_j = t = \width(\vec{x}) = \cost(\vec{x}) = \sum_{i=1}^y \alpha_i + \sum_{j=1}^y \beta_j. \]
If there exists some $i$ such that $u_i > \alpha_i$ or if there exists some $j$ such that $d_j>\beta_j$, then we 
are done by the arguments of \emph{Case A} and \emph{Case B} in the proof of Theorem \ref{conj1}. Hence our only concern is when $u_i=\alpha_i$ for all $i$ and $d_j=\beta_j$ for all $j$.

Suppose there exists $h \in \Phi_1$ such that
\[ \sum_{k=1}^h x(\pi(k)) = 0. \]
Since $h \in \Phi_1=\{1, 2, \ldots , \gamma_2-1 \}$, $h \neq 0$. Since $y>1$, $h \neq t$. Let $S_1 = \{1, 2, \ldots , h\}$ and $S_2 = \{h+1, h+2, \ldots , t\}$. Then $(x \circ \pi)|_{S_1} \in {\sf Catalan}_{S_1}$ and $(x \circ \pi)|_{S_2} \in {\sf Catalan}_{S_2}$. Thus by Proposition \ref{lem1}, $x|_{\pi(S_1)} \in {\sf Catalan}_{\pi(S_1)}$ and $x|_{\pi(S_2)} \in {\sf Catalan}_{\pi(S_2)}$. Since $S_1 \cup S_2 = [t]$, $\pi(S_1) \cup \pi(S_2) = [t]$. Therefore, two sublists of $\vec{x}$ that correspond to the maps $x|_{\pi(S_1)}$ and $x|_{\pi(S_2)}$ are complementary and are both generalized Catalan. Hence $\vec{x}$ is reducible.

Suppose otherwise, that there is no $h \in \Phi_1$ such that
\[ \sum_{k=1}^h x(\pi(k)) = 0. \]
For each $h \in \Phi_1$ such that $x(\pi(h))<0$, we have
\[ \sum_{k=1}^h x(\pi(k)) \in (0, \alpha_1) \]
due to Proposition \ref{obs1}. Since
\[ u_1 = |\{h \in \Phi_1 \ : \ x(\pi(h))<0 \}| > \alpha_1 -1, \]
by pigeonhole, there exists $h_1, h_2 \in \Phi_1$ such that $h_1<h_2$,
\[ x(\pi(h_1)), x(\pi(h_2)) <0, \]
and
\[ \sum_{k=1}^{h_1} x(\pi(k)) = \sum_{k=1}^{h_2} x(\pi(k)). \]
This is exactly what we had in \emph{Case A} (of the proof of Theorem~\ref{conj1}. Hence 
continuing the argument there, we obtain our claim.
\end{proof}
\begin{thm} \label{thm:y1}
If $\cost(\vec{x}) = \width(\vec{x})$, $y=1$, and $\vec{x}$ is not reducible, then all the entries of $\vec{x}$ are either $\alpha_1$ or $-\beta_1$. Furthermore, $\alpha_1$ and $\beta_1$ are relatively prime. 
\end{thm}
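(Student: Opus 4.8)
The plan is as follows. Since $y=1$, write $\vec x=(p_1,\dots,p_a,-q_1,\dots,-q_b)$ with all $p_i,q_j>0$, and put $\alpha:=\alpha_1=\max_i p_i$, $\beta:=\beta_1=\max_j q_j$, $N:=\sum_i p_i=\sum_j q_j$; the hypothesis $\cost(\vec x)=\width(\vec x)$ reads $\alpha+\beta=a+b$. I first record two easy facts. (a) Any sublist of $\vec x$ again consists of some $p_i$'s followed by some $-q_j$'s, hence is generalized Catalan precisely when it sums to $0$; so $\vec x$ is reducible if and only if some $M$ with $0<M<N$ is simultaneously a subset sum of $\{p_i\}$ and of $\{q_j\}$. (b) From $N=\sum_i p_i\le a\alpha$ and $N\le b\beta$ we get $\alpha+\beta=a+b\ge N\bigl(\tfrac1\alpha+\tfrac1\beta\bigr)$, so $N\le\alpha\beta$, with equality forcing $p_i=\alpha$ for all $i$ and $q_j=\beta$ for all $j$ (and then $a=\beta$, $b=\alpha$), and conversely if every entry is $\alpha$ or $-\beta$ then $N=\alpha\beta$. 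By (b) it suffices to prove that non-reducibility forces $N=\alpha\beta$, together with the coprimality assertion in the resulting case $\vec x=(\alpha,\dots,\alpha,-\beta,\dots,-\beta)$ ($\beta$ copies of $\alpha$, then $\alpha$ copies of $-\beta$). The latter is immediate from (a): the subset sums of $\{p_i\}$ are then $\{0,\alpha,2\alpha,\dots,\alpha\beta\}$ and those of $\{q_j\}$ are $\{0,\beta,\dots,\alpha\beta\}$, so their common values are the multiples of $\operatorname{lcm}(\alpha,\beta)$ in $[0,\alpha\beta]$; this set equals $\{0,\alpha\beta\}$ exactly when $\operatorname{lcm}(\alpha,\beta)=\alpha\beta$, i.e. $\gcd(\alpha,\beta)=1$ — and that condition does make $\vec x$ non-reducible, so these are precisely the exceptions. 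Finally, if $\alpha=1$ then all $p_i=1$, $N=a$, and $\beta\le\sum_j q_j=a$ together with $\sum_j q_j\ge\beta+b-1$ and $a+b=1+\beta$ forces $b=1$ and $\vec x=(1,\dots,1,-\beta)$, already of the asserted form; symmetrically for $\beta=1$. Hence in proving $N=\alpha\beta$ I may assume $\alpha,\beta\ge2$; then $N\ge\alpha+\beta-1$ (combine $N=\sum_i p_i\ge\alpha+(a-1)$, its $\beta$-analogue, and $a+b=\alpha+\beta$), so $a\ge N/\alpha>1$ and likewise $b>1$, i.e. $a,b\ge2$.

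Now assume $\vec x$ is not reducible. I will run the construction of Section~\ref{sec:2}: form $\pi$ as in \eqref{eqn:pidefcases}, set $\vec w:=x\circ\pi$ and $W_h:=\sum_{k\le h}x(\pi(k))$. For $y=1$ one has $\Phi_1=[t]$ and $\Psi_1=\{0,\dots,t-1\}$, so in the notation of the proof of Theorem~\ref{conj1} the counts are $u_1=b$ and $d_1=a$, with $u_1+d_1=a+b=\alpha_1+\beta_1$. If $u_1>\alpha_1$ the argument of Case~A gives a reduction, and if $d_1>\beta_1$ that of Case~B does; since $\vec x$ is not reducible we must have $u_1=\alpha_1=\alpha$ and $d_1=\beta_1=\beta$, i.e. $b=\alpha$ and $a=\beta$. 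Next, if $W_h=0$ for some $0<h<t$, then splitting $[t]=\{1,\dots,h\}\sqcup\{h+1,\dots,t\}$ and applying Proposition~\ref{lem1} (as in the proof of Theorem~\ref{thm:equality}) reduces $\vec x$; hence $W_h>0$ for $0<h<t$. By Proposition~\ref{obs1} the $\alpha$ numbers $\{W_h:x(\pi(h))<0\}$ lie in $[0,\alpha)$; if two coincided, say $W_{h_1}=W_{h_2}$ with $h_1<h_2$ negative positions, the argument of Case~A — which past its pigeonhole step uses only the existence of such $h_1<h_2$ — would reduce $\vec x$; so these $\alpha$ numbers are exactly $0,1,\dots,\alpha-1$, with the value $0$ occurring only at $h=t$. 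Symmetrically, using Proposition~\ref{obs2} and Case~B, the $\beta$ numbers $\{W_h:x(\pi(h{+}1))>0\}$ are exactly $0,1,\dots,\beta-1$, with $0$ occurring only at $h=0$.

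It remains to extract $N=\alpha\beta$ — equivalently, that every step of $\vec w$ is $\alpha$ or $-\beta$ — from this rigid structure. View $\vec w$ as alternating ascents and descents between valleys $0=V_1,V_2,\dots,V_m,V_{m+1}=0$ and peaks $\widehat V_1,\dots,\widehat V_m$. Since each partial sum preceding an up-step lies in $\{0,\dots,\beta-1\}$ and each following a down-step lies in $\{0,\dots,\alpha-1\}$, every partial sum is $<\alpha+\beta$, the interior valleys lie in $\{1,\dots,\min(\alpha,\beta)-1\}$, every up-step that is not the last in its ascent is $\le\beta-1$ (it is a difference of two members of $\{0,\dots,\beta-1\}$), and every down-step not the last in its descent is $\le\alpha-1$. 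The goal is to show a single non-extremal step is impossible: a short up-step inside some ascent (or short down-step inside some descent) must, when the prefix partial sums of that ascent are compared against the full set $\{0,\dots,\beta-1\}$ (resp. $\{0,\dots,\alpha-1\}$), force either two negative positions with a common partial sum (triggering Case~A), or two pre-ascent positions with a common partial sum (triggering Case~B), or an interior partial sum equal to $0$ — each contradicting the rigidity just established. Carrying this out yields $\vec x=(\alpha,\dots,\alpha,-\beta,\dots,-\beta)$, and then paragraph~1 finishes the proof.

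The main obstacle is exactly that last step: turning a hypothetical non-extremal step into one of the three forbidden configurations. I expect this to need a minimal-counterexample argument isolating the first ascent of $\vec w$ that contains a short step and tracking which elements of $\{0,\dots,\beta-1\}$ and $\{0,\dots,\alpha-1\}$ have already been ``used up'' by the valleys, peaks, and intermediate partial sums seen so far; the combinatorial bookkeeping of this accounting is the delicate part. Equivalently — and this may be the cleaner statement to prove head-on — one must show: if $A,B$ are multisets of positive integers with $\sum A=\sum B=N<(\max A)(\max B)$ and $|A|+|B|=\max A+\max B$, then $A$ and $B$ admit a common subset sum strictly between $0$ and $N$.
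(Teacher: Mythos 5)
Your proposal is incomplete at exactly the point you flag yourself. Paragraph~1 correctly reduces the problem to the clean statement at the end (two multisets $A,B$ of positive integers with $\sum A=\sum B=N<(\max A)(\max B)$ and $|A|+|B|=\max A+\max B$ admit a nontrivial common subset sum), and handles the edge cases $\alpha=1$, $\beta=1$, and the coprimality discussion of the rectangular case correctly. Paragraph~2 correctly re-derives the rigid structure of the $\pi$-partial sums: the $\alpha$ values $\{W_h : x(\pi(h))<0\}$ are exactly $\{0,\ldots,\alpha-1\}$ (zero only at $h=t$), and similarly for the $\beta$ pre-ascent values. But paragraph~3 — turning a single non-extremal step into one of the three forbidden configurations — is only a plan, not a proof; as you say, ``the combinatorial bookkeeping of this accounting is the delicate part,'' and you never carry it out. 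As written, the argument does not establish the theorem.

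The paper closes the same gap with a different and substantially simpler device, which is worth contrasting. Rather than working with the Section~\ref{sec:2} permutation $\pi$ (which starts at $x_1$, possibly equal to $\alpha_1$, so its partial sums are not globally bounded by $\alpha_1-1$), the paper first exploits $y=1$ to reorder the up-run so that its \emph{minimum} element sits in position~$1$, then builds a fresh permutation $\sigma$ with a slightly more aggressive greedy rule (take a negative step whenever the current partial sum is $\geq 0$). With $M_1=x_1\leq\alpha_1-1$, a one-line induction shows \emph{every} partial sum $M_q$ lies in $[1-\beta_1,\alpha_1-1]$. Since non-reducibility forbids $M_q=0$ for $1\leq q\leq t-1$, the $t-1$ values $M_1,\ldots,M_{t-1}$ live in a set of size $\alpha_1+\beta_1-2=t-2$, and pigeonhole gives an immediate collision, hence a zero-sum interval, hence a reduction. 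Therefore $x_1=\alpha_1$, so the whole up-run is constant; the down-run is handled by reversal-and-negation symmetry. Your approach, by keeping $\pi$ fixed and deriving a bijective rigidity on partial-sum values at negative and pre-ascent positions, gives a more detailed structure theorem but then still owes the reader the inference from that structure to ``all steps extremal,'' which is the hard part; the paper's reorder-then-bound trick sidesteps this entirely by controlling \emph{all} partial sums at once. If you want to salvage your route, you should either prove the final multiset statement directly, or adopt the paper's reordering idea: for $y=1$ the permutation is free to start at any up-run element, and starting at the smallest one is what makes the pigeonhole close.
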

\begin{proof}
Since $y=1$, there is one up-run and one down-run in $\vec{x}$. Hence changing the order inside the up-run or inside the down-run doesn't affect the reducibility of $\vec{x}$. Thus we can assume that $x_1$ is the minimum of any $x_i$ in the up-run.

Suppose $x_1 < \alpha_1$. Construct $\sigma \in \mathfrak{S}_t$ associated to $x$. Let $\sigma(1) = 1$. For $2 \leq q \leq t$, let
\[ s_q:= \min(i \in [t] \ : \ i \not\in \{\sigma(1), \ldots , \sigma(q-1) \} \text{ and } x(i)<0) \]
and
\[ s_q':= \min(i \in [t] \ : \ i \not\in \{\sigma(1), \ldots , \sigma(q-1) \} \text{ and } x(i)>0). \]
Now
\[ \sigma(q) = \begin{cases} s_q & \text{if $\sum_{i=1}^{q-1} x(\sigma(i)) \geq 0$} \\ s_q' & \text{otherwise} \end{cases} \]
\begin{lem}
The construction of $\sigma \in \mathfrak{S}_t$ is well-defined.
\end{lem}
\begin{proof}
If $\sum_{i=1}^{q-1}x(\sigma(i)) \geq0$, then 
\[ \sum_{j \not\in \{ \sigma(1), \ldots , \sigma(q-1) \}} x(j) = \sum_{j=1}^t x(j) - \sum_{i=1}^{q-1}x(\sigma(i)) \leq 0. \]
Thus $s_q <\infty$. If $\sum_{i=1}^{q-1}x(\sigma(i)) < 0$, then
\[ \sum_{j \not\in \{ \sigma(1), \ldots , \sigma(q-1) \}} x(j) = \sum_{j=1}^t x(j) - \sum_{i=1}^{q-1} x(\sigma(i)) >0 .\]
Hence $s_{q'} < \infty$.
\end{proof}
Let $M_q := \sum_{i=1}^q x(\sigma(i))$. Suppose $M_q = 0$ for some $1 \leq q \leq t-1$. Then the sublist of $\vec{x}$ consisting of $x_{\sigma(1)}, \ldots , x_{\sigma(q)}$ is generalized Catalan. This sublist and its complement witness the reducibility of $\vec{x}$, a contradiction. Hence $M_q \neq 0$ for all $1 \leq q \leq t-1$.

\begin{lem}
$1- \beta_1 \leq M_q \leq \alpha_1 -1$ for all $1 \leq q \leq t$.
\end{lem}
\begin{proof}
We prove by induction on $q \geq1$. In the base case, $q=1$, $M_q = x(\sigma(1)) = x_1$ so $1-\beta_1 \leq M_1 \leq \alpha_1-1$. Now suppose $q \geq 2$. If $\sigma(q) = s_q$, then
\[ M_q = M_{q-1} + x(s_q) \geq 1 + x(s_q) \geq 1-\beta_1 \]
and
\[ M_q = M_{q-1}+x(s_q) \leq M_{q-1} \leq \alpha_1-1.\]
If $\sigma(q) = s_q'$, then
\[ M_q = M_{q-1} + x(s_q') \leq -1 + x(s_q') \leq \alpha_1-1\]
and
\[ M_q = M_{q-1} + x(s_q') \geq M_{q-1} \geq 1- \beta_1.\]
This completes the induction.
\end{proof}
Therefore, for $1 \leq q \leq t-1$, $M_q \in V = \{1-\beta_1, 2-\beta_1, \ldots , \alpha_1-1 \} - \{0 \}$. Since
\[ |V| = \alpha_1+\beta_1-2 = \cost(\vec{x}) -2 = \width(\vec{x})-2 = t-2 <t-1, \]
by pigeonhole, there exists $q_1, q_2 \in [t-1]$ such that $q_1<q_2$ and $M_{q_1} = M_{q_2}$. Then
\[ \sum_{i=q_1+1}^{q_2} x(\sigma(i)) = 0 .\]
Consider the sublist of $\vec{x}$ consists of $x_{\sigma(q_1+1)}, x_{\sigma(q_1+2)}, \ldots , x_{\sigma(q_2)}$. This sublist and its complement witness the reducibility of $\vec{x}$, a contradiction.

This shows that $x_1 = \alpha_1$, which means that all the entries in the up-run are $\alpha_1$. Similarly, all the entries in the down-run are $-\beta_1$. (Indeed, if we consider the sequence $\vec{w} = (-x_t, -x_{t-1}, \ldots , -x_1)$, the down-run of $\vec{x}$ corresponds to the up-run of $\vec{w}$)

Finally, if $\alpha_1$ and $\beta_1$ are not relatively prime, then choose $\beta_1/gcd(\alpha_1, \beta_1)$ many positive entries and $\alpha_1/gcd(\alpha_1, \beta_1)$ many negative entries to obtain a sublist $\vec{x}^{\bullet}$. $\vec{x}^{\bullet}$ and its complement sublist $\vec{x}^{\circ}$ witness the reduciblility of $\vec{x}$, a contradiction. Hence $\alpha_1$ and $\beta_1$ are relatively prime.
\end{proof}

Given a partition $\lambda$, the \textit{conjugate} $\lambda'$ is the partition whose Young diagram is the transpose of the Young diagram of $\lambda$. For the similar reason (\cite[Proposition~5.7]{hilbasis}) that Theorem~\ref{conj1} implies
Corollary~\ref{conj2}, Theorem~\ref{conj1}, Theorem~\ref{thm:equality}, and Theorem \ref{thm:y1} implies:

\begin{cor}
If $\lambda_1 = r$ and $(\lambda, \mu)$ is not commonly reducible, then $\lambda$ and $\mu$ are both rectangles. Furthermore, $\lambda_1$ and $\mu_1$ are relatively prime.
\end{cor}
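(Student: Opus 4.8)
The plan is to run the reduction that already takes Theorem~\ref{conj1} to Corollary~\ref{conj2}, namely the one supplied by \cite[Proposition~5.7]{hilbasis}, but now feeding in Theorems~\ref{thm:equality} and~\ref{thm:y1} where Theorem~\ref{conj1} was used. Recall that \cite[Proposition~5.7]{hilbasis} attaches to a lattice point $(\lambda,\mu)$ of the Kostka cone a generalized Catalan list $\vec x=\vec x(\lambda,\mu)$, built from the columns of $\lambda$ and $\mu$ and so naturally described through the conjugates $\lambda'$, $\mu'$, with the properties that (i) $\vec x$ is reducible precisely when $(\lambda,\mu)$ is commonly reducible, and (ii) the comparison of $\lambda_1$ with $r$ is recorded by the comparison of $\cost(\vec x)$ with $\width(\vec x)$; in particular $\lambda_1>r$ forces $\cost(\vec x)<\width(\vec x)$ (this is the implication that gives Corollary~\ref{conj2}), and the same computation gives $\cost(\vec x)\le\width(\vec x)$ in the boundary case $\lambda_1=r$.

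So suppose $\lambda_1=r$ and $(\lambda,\mu)$ is not commonly reducible. First, by (i) the list $\vec x$ is not reducible; Theorem~\ref{conj1} then rules out $\cost(\vec x)<\width(\vec x)$, so (ii) forces $\cost(\vec x)=\width(\vec x)$. Next, writing $2y$ for the number of runs of $\vec x$: if $y>1$ then Theorem~\ref{thm:equality} makes $\vec x$ reducible, hence $(\lambda,\mu)$ commonly reducible by (i), contrary to hypothesis; so $y=1$. Now Theorem~\ref{thm:y1} applies to $\vec x$, giving that every entry of $\vec x$ equals $\alpha_1$ or $-\beta_1$ and that $\gcd(\alpha_1,\beta_1)=1$.

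The last step is to transport these two facts back through the correspondence of \cite[Proposition~5.7]{hilbasis}. Inspecting that construction, a list $\vec x(\lambda,\mu)$ all of whose entries lie in $\{\alpha_1,-\beta_1\}$ can come only from a pair $(\lambda,\mu)$ in which $\lambda$ and $\mu$ are both rectangles; and for such a pair the maxima $\alpha_1$, $\beta_1$ of the unique up-run and down-run recover the side lengths of these rectangles, which one checks coincide with $\lambda_1$ and $\mu_1$ up to conjugation --- this is the reason the conjugate $\lambda'$ is recalled immediately before the statement. Hence $\lambda$ and $\mu$ are rectangles and $\gcd(\lambda_1,\mu_1)=\gcd(\alpha_1,\beta_1)=1$, as claimed.

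The hard part will be exactly this last paragraph: one must open up the construction of \cite[Proposition~5.7]{hilbasis} far enough to verify that the degenerate paths isolated by Theorem~\ref{thm:y1} arise only from pairs of rectangles, and to pin down how $\{\alpha_1,\beta_1\}$ sits relative to $\{\lambda_1,\mu_1\}$ so that the two coprimality assertions match. Granting that identification, the rest is a verbatim substitution of Theorems~\ref{thm:equality} and~\ref{thm:y1} into the argument of \cite[Proposition~5.7]{hilbasis} that produces Corollary~\ref{conj2}.
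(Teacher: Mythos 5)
Your overall plan matches the paper's: build the Catalan list from the columns, push Theorems~\ref{conj1}, \ref{thm:equality}, \ref{thm:y1} through the correspondence, then translate the ``rigid'' conclusion of Theorem~\ref{thm:y1} into the statement that $\lambda,\mu$ are rectangles. But the proposal stops exactly where the work begins, and the sketch you give of that final step is not just incomplete but misidentifies the relation between $\{\alpha_1,\beta_1\}$ and $\{\lambda_1,\mu_1\}$. With $\vec x$ defined by $x_j=\mu_j'-\lambda_j'$ for $1\le j\le t=\lambda_1$, Theorem~\ref{thm:y1} gives $x_1=\cdots=x_{\beta_1}=\alpha_1$ and $x_{\beta_1+1}=\cdots=x_t=-\beta_1$ with $\alpha_1+\beta_1=t$. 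The rectangles come out as $\mu_1=\beta_1$ and $\lambda_1=t=\alpha_1+\beta_1$; so it is \emph{not} the case that $\alpha_1,\beta_1$ ``coincide with $\lambda_1,\mu_1$ up to conjugation.'' The coprimality claim still holds, but via $\gcd(\lambda_1,\mu_1)=\gcd(\alpha_1+\beta_1,\beta_1)=\gcd(\alpha_1,\beta_1)$, which you would need to say.

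More seriously, the inference ``$\vec x$ constant on its two runs $\Rightarrow$ $\lambda,\mu$ rectangles'' is not automatic from the shape of $\vec x$ alone; $\vec x$ only records the \emph{differences} $\mu_j'-\lambda_j'$, so a priori one could fatten $\lambda$ and $\mu$ together without changing $\vec x$. What forces rectangles is the saturation of the chain of inequalities: $\cost(\vec x)\le\ell(\mu)\le r=\lambda_1=\width(\vec x)$, which in the equality case gives $\ell(\mu)=r$; combined with $\mu_{\beta_1}'-\lambda_{\beta_1}'=\alpha_1$ and $\mu_{\beta_1+1}'-\lambda_{\beta_1+1}'=-\beta_1$ this pins $\mu_{\beta_1}'=r$ and $\mu_{\beta_1+1}'=0$, whence $\mu'$ takes only the values $r$ and $0$ and $\lambda'$ is constant equal to $\beta_1$. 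This arithmetic --- not any feature of the path by itself --- is the actual content of the corollary, and the paper also disposes of the degenerate possibility $x_j=0$ (which would immediately give common reducibility) before invoking the theorems. You should carry out this computation rather than defer it to an unverified reading of \cite[Proposition~5.7]{hilbasis}.
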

\begin{proof}
Define a sequence $\vec{x}$ of length $\lambda_1$ by $x_j := \mu'_j-\lambda'_j$. Since $|\lambda| = |\mu|$,
\[ \sum_{i=1}^t x_i =0 \]
where $t = \lambda_1$. If there exists some $j$ such that $x_j= 0$, then $(\lambda, \mu)$ is commonly reducible, a contradiction. Observe that $\lambda' \leq \mu'$ in dominance order (which is equivalent to $\lambda \geq \mu$ in dominance order) is equivalent to
\[ \sum_{i=1}^q x_i \geq 0 \text{ for all } 1 \leq q \leq t.\]
Thus $\vec{x}$ is a generalized Catalan sequence. Let $2y$ be the number of runs in $\vec{x}$ and let $x_{i_k}$ be the first element of the $k$th run ($1 \leq k \leq 2y$). Then the maximum number (in absolute value) of any $x_i's$ in run $k$, denoted as $m_k$ satisfies
\[ m_k \leq \max( \mu'_j \ : \ i_k \leq j < i_{k+1}) - \min(\lambda'_j \ : \ i_k \leq j < i_{k+1})  \leq \mu'_{i_k} - \lambda'_{i_{k+1}}\]
for $k$ odd and
\[ m_k \leq \max( \lambda'_j \ : \ i_k \leq j < i_{k+1}) - \min(\mu'_j \ : \ i_k \leq j < i_{k+1}) \leq \lambda'_{i_k} - \mu'_{i_{k+1}}\]
for $k$ even ($i_{2y+1} = t+1$). Hence
\[ cost(\vec{x}) = \sum_{k=1}^{2y} m_k \leq (\mu'_{i_1} - \lambda'_{i_2}) + (\lambda'_{i_2} - \mu'_{i_3}) + \cdots  = \mu'_{i_1} = \ell(\mu). \]
Therefore, due to the hypothesis $\lambda_1 = r$,
\[ \cost(\vec{x}) \leq \ell(\mu) \leq r = \lambda_1 = \width(\vec{x}). \]
Suppose $\cost(\vec{x}) < \width(\vec{x})$. Then by Theorem \ref{conj1}, $\vec{x}$ is reducible. In other words, there are two Catalan sublists $\vec{x}^{\bullet}$ and $\vec{x}^{\circ}$ which are complement to each other. Let $\vec{x}^{\bullet}$ correspond to the set of columns $C$ and $\vec{x}^{\circ}$ correspond to the set of columns $[\lambda_1] - C$. Let $\lambda^{\bullet}$ and $\lambda^{\circ}$ be partitions defined as columns $C$ and $[\lambda_1] - C$ of $\lambda$, respecitvely. Similarly define $\mu^{\bullet}$ and $\mu^{\circ}$. Since $\vec{x}^{\bullet}$ is Catalan, $\lambda^{\bullet} \geq \mu^{\bullet}$ in dominance order by the equivalence mentioned earlier in the proof. Similarly $\lambda^{\circ} \geq \mu^{\circ}$ in dominance order. Hence $(\lambda, \mu) = (\lambda^{\bullet}, \mu^{\bullet}) + (\lambda^{\circ}, \mu^{\circ})$ witnesses the common reducibility of $(\lambda, \mu)$, a contradiction.

Now $\cost(\vec{x}) = \width(\vec{x})$, which means that
\[ \cost(\vec{x}) = \ell(\mu) = r = \lambda_1 = \width(\vec{x}) .\]
If $y>1$, then by Theorem \ref{thm:equality} $\vec{x}$ is reducible. Then for the same reason as in the previous paragraph, $(\lambda, \mu)$ is commonly reducible, a contradiction. Similarly, if $y=1$ and $\vec{x}$ is reducible, then $(\lambda, \mu)$ is commonly reducible, also a contradiction.

Hence we have that $y=1$, $\cost(\vec{x}) = \width(\vec{x})$, and $\vec{x}$ is not reducible. Then by Theorem \ref{thm:y1}, all the entries of $\vec{x}$ are either $\alpha_1$ or $-\beta_1$, where $\alpha_1$ and $\beta_1$ are relatively prime. Thus
\[ x_1 = \cdots = x_{\beta_1} = \alpha_1, \quad x_{\beta_1+1} = \cdots =x_t =-\beta_1, \]
and $\alpha_1+\beta_1 = t = \lambda_1 = r$. Observe that
\[ x_{\beta_1} = \mu'_{\beta_1} - \lambda'_{\beta_1} = \alpha_1 \text{ and } x_{\beta_1+1} = \mu'_{\beta_1+1} - \lambda'_{\beta_1+1} = -\beta_1 .\]
This implies that
\[ \mu'_{\beta_1} = \lambda'_{\beta_1} + \alpha_1 \geq \lambda'_{\beta_1+1} + \alpha_1 = \mu'_{\beta_1+1} + \alpha_1 + \beta_1 = \mu'_{\beta_1+1} + r \geq r.\]
Since $\mu'_{\beta_1} \leq \ell(\mu) \leq r$, $\mu'_{\beta_1} = r$ and $\mu'_{\beta_1+1} = 0$. Therefore,
\[ \mu'_1 = \mu'_2 = \cdots = \mu'_{\beta_1} = r \text{ and } \mu'_{\beta_1+1} = \cdots = \mu'_t = 0 .\]
This means that
\[ \lambda'_1 = \lambda'_2 = \cdots = \lambda'_{\beta_1} = \beta_1 \text{ and } \lambda'_{\beta_1+1} = \cdots = \lambda'_t = \beta_1.\]
Thus $\lambda$ and $\mu$ are both rectangles. Moreover, $\lambda_1 = \alpha_1+\beta_1$ and $\mu_1 = \beta_1$ are relatively prime.
\end{proof}

\section*{Acknowledgment}
We thank Shiliang Gao, Joshua Kiers, Gidon Orelowitz, and Alexander Yong for helpful comments and discussions. We are
grateful to Shiliang Gao for raising the question that led to Theorem~\ref{thm:equality}.

\end{document}